\newcommand{\R}{\mathbb{R}}
\newcommand{\N}{\mathbb{N}}
\newcommand{\tu}{\widetilde{u}}
\newcommand{\hu}{\widehat{u}}
\newcommand\norm[1]{\left\lVert#1\right\rVert}
\newcommand{\Grad}{\nabla}
\newcommand{\Div}{\operatorname{div}}
\newcommand{\dom}{\Omega}
\newcommand{\Vh}{\mathbb{V}_h}
\newcommand{\Ph}{\mathbb{P}_h}
\newcommand{\Yh}{\mathbb{Y}_h}
\newcommand{\Uh}{\mathbb{U}_h}
\newcommand{\weak}{\rightharpoonup}
\newcommand{\weakstar}{\overset{\star}\rightharpoonup}
\newcommand{\PUh}{\mathcal{P}_{\Uh}}
\newcommand{\PPh}{\mathcal{P}_{\Ph}}
\newcommand{\PVh}{\mathcal{P}_{\Vh}}
\newcommand{\Pl}{\mathcal{P}} 
\newtheorem{lemma}{Lemma}[section]
\newtheorem{theorem}[lemma]{Theorem}
\newtheorem{remark}[lemma]{Remark}
\newtheorem*{maintheorem*}{Main Theorem}
\theoremstyle{definition}{\newtheorem{definition}[lemma]{Definition}}
\numberwithin{equation}{section}
\title[Convergence of a BDF2 Projection Method]{Convergence of a Second-Order Projection Method to Leray-Hopf Solutions of the Incompressible Navier-Stokes Equations}
\date{\today}
\thanks{This work was supported in part by the U.S. Department of Energy, Office of Science, Office of Advanced Scientific Computing Research's Applied Mathematics Competitive Portfolios program under Contract No. AC02-05CH11231 and by National Science Foundation award DMS 2438083.}
\author[F. Weber]{Franziska Weber}
\address[Franziska Weber]{\newline Department of Mathematics \newline UC Berkeley \newline Evans Hall, Berkeley,  CA 94702, USA.}
\email[]{fweber@berkeley.edu}
  \subjclass[2020]{65M12; 65M60; 76D05; 76M10}
\begin{document}
	
	\begin{abstract}
		We analyze a second-order projection method for the incompressible Navier-Stokes equations on bounded Lipschitz domains. The scheme employs a Backward Differentiation Formula of order two (BDF2) for the time discretization, combined with conforming finite elements in space. Projection methods are widely used to enforce incompressibility, yet rigorous convergence results for possibly non-smooth solutions have so far been restricted to first-order schemes. We establish, for the first time, convergence (up to subsequence) of a second-order projection method to Leray–Hopf weak solutions under minimal assumptions on the data, namely $u_0 \in L^2_{\mathrm{div}}(\Omega)$ and $f \in L^2(0,T;L^2_{\mathrm{div}}(\Omega))$.
		
		Our analysis relies on two ingredients: A discrete energy inequality providing uniform $L^\infty(0,T;L^2(\Omega))$ and $L^2(0,T;H^1_0(\Omega))$ bounds for suitable interpolants of the discrete velocities, and a compactness argument combining Simon’s theorem with refined time-continuity estimates. These tools overcome the difficulty that only the projected velocity satisfies an approximate divergence-free condition, while the intermediate velocity is controlled in space.
		We conclude that a subsequence of the approximations converges to a Leray–Hopf weak solution. This result provides the first rigorous convergence proof for a higher-order projection method under no additional assumptions on the solution beyond those following from the standard a priori energy estimate.
	
	\end{abstract}
	
	\maketitle

	\section{Introduction}
	
	We consider the incompressible Navier–Stokes equations describing the motion of a homogeneous viscous fluid in a bounded domain $\dom \subset \R^d$, $d=2,3$, over a finite time interval $[0,T]$:
	\begin{equation}
		\label{eq:NS}
		\begin{split}
			\partial_t u + (u\cdot\Grad)u + \nabla p &= \mu \Delta u + f,\quad (t,x)\in [0,T]\times\dom,\\
			\Div u & = 0,\quad (t,x)\in [0,T]\times\dom,
		\end{split}
	\end{equation}
	where $u=u(t,x):[0,T]\times\Omega \to \mathbb{R}^d$
 is the fluid velocity, $p=p(t,x):[0,T]\times\Omega\to \mathbb{R}$
	is the pressure, and $f:[0,T]\times\Omega\to \mathbb{R}^d$
	 is a given forcing term. The domain $\Omega\subset\mathbb{R}^d$,
	$d=2,3$, is a bounded Lipschitz domain, $\mu>0$
	is the kinematic viscosity (inversely proportional to the Reynolds number), and $T>0$
is a fixed final time. We impose homogeneous Dirichlet boundary conditions on $u$ and prescribe initial data
	\begin{equation*}
		u(0,\cdot)=u_0\in L^2_{\text{div}}(\dom),
	\end{equation*}
	where $L^2_{\mathrm{div}}(\Omega)$
	denotes the subspace of square-integrable divergence-free vector fields with vanishing normal trace on $\partial\Omega$. We assume $f\in L^2(0,T;L^2_{\mathrm{div}}(\Omega))$, though the analysis can likely be extended to $f\in L^2(0,T;(H^1_{\mathrm{div}}(\Omega))^*)$ with appropriate treatment of the forcing approximation in the finite element space.
	
	One of the major challenges in the numerical approximation of this system is the coupling between the nonlinearity and the incompressibility constraint. A simple yet successful approach to address this difficulty is through splitting or fractional step methods. These methods first evolve equation~\eqref{eq:NS} in time to compute an intermediate velocity field that need not be divergence-free, then project this field onto the space of divergence-free functions. This approach was first introduced by Chorin~\cite{Chorin1967,Chorin1968} and Temam~\cite{Temam1969}, and is commonly referred to as the Chorin projection method or simply the projection method.
	The projection method has been extensively studied, with error estimates derived in various settings under smoothness assumptions on the velocity field. Such smoothness can be established for short times in 3D and globally in 2D under sufficient regularity assumptions on the initial data and forcing term~\cite{E2002,Hou1993,Hou1992,Achdou2000,Guermond1998,Shen1996}. For semi-discrete (time-discrete but spatially continuous) versions, convergence to weak Leray-Hopf solutions of the Navier-Stokes equations~\eqref{eq:NS} was established by Temam~\cite{Temam1977,Temam1968} under only the assumptions provided by a priori energy estimates, with convergence guaranteed up to a subsequence since uniqueness remains unknown in 3D.
	However, the convergence analysis for fully-discrete versions has proven more challenging and was only established recently. Kuroki and Maeda~\cite{Kuroki2020,Maeda2022} proved convergence for finite difference methods on arbitrary Lipschitz domains, while Gallou\"et et al.~\cite{Gallouet2023} established convergence for a finite volume method on rectangular domains. More recently, Eymard and Maltese~\cite{Eymard2024} and the author~\cite{chorinprojection} proved convergence for finite element spatial discretizations using conforming elements -- the former formulating the projection step as a Poisson problem and the latter as a Darcy problem. While finite element discretizations for projection methods have been extensively studied~\cite{Guermond1996,Guermond1998,Rannacher1992,Prohl1997}, convergence proofs for the general case with possibly non-smooth solutions were not available prior to~\cite{Eymard2024}.
	The primary technical challenge in these convergence proofs stems from the inability to directly apply the Aubin-Lions-Simon lemma, which is the standard tool for establishing precompactness of approximation schemes for the incompressible Navier-Stokes equations. This difficulty arises due to the presence of two velocity field approximations and insufficient bounds on the pressure variable. Specifically, while an $L^2(0,T;H^1_0(\Omega))$-bound is available for the intermediate velocity field $\tu$, it is not divergence free, and the time derivative cannot be bounded in the dual space of $H^1_0(\dom)\cap H^k(\dom)$ for some sufficiently large $k$ due to the lack of sufficiently strong estimates on the pressure variable. To overcome this obstacle, new compactness results were developed in~\cite{Eymard2024,chorinprojection} for the finite element setting.
	
	Despite these advances, all methods for which convergence has been established~\cite{Kuroki2020,Maeda2022,Eymard2024,Gallouet2023,chorinprojection} are limited to first-order accuracy. The question of whether approximations computed by second-order accurate projection methods converge to weak solutions of~\eqref{eq:NS} under minimal assumptions, specifically, requiring only $u_0\in L^2_{\operatorname{div}}(\Omega)$, remained open.
	
	\subsection{Contributions of this work} We address this gap by analyzing a second-order accurate incremental projection method~\cite{Goda1979} for~\eqref{eq:NS} using the second-order Backward Difference Formula (BDF2) for the time discretization, as in~\cite[Equations (3.3) and (3.4)]{Guermond2006}, and see also~\cite{Guermond1999}. Specifically, we let $N\in \N$ the number of time levels and   discretize the time interval $[0,T]$ into time levels $t^m=m\Delta t$, $m=0,1,\dots, N$ where $\Delta t = T/N$. At each time level $t^m$, we seek discrete approximations $u^m(x)$ and $p^m(x)$ to $u(t^m,x)$ and $ p(t^m,x)$. We denote by $\tu^{m+1}$ the intermediate velocity field computed in the first, prediction step. 
	We denote the initial data $(u^0, p^0)=(u_0,0)$. Then at every time level $t^m$, $m= 0,\dots, M-1$, given $(u^m,p^m)$, we update $(u^{m+1}, p^{m+1})$ according to the following two steps:
	\begin{enumerate}
		\item[{\bf Step 1}] (Prediction step): We compute $\tu^{m+1}$ through
		
		\begin{equation}\label{eq:step1semi}
			\frac{3\tu^{m+1}-4u^m+ u^{m-1}}{2\Delta t}+ (\hu^{m+1}\cdot\Grad)\tu^{m+1}+ \Grad p^m= \mu \Delta \tu^{m+1}+f^{m+1}
		\end{equation}
		with boundary condition
		\begin{equation*}
			\left.\tu^{m+1}\right|_{\partial\dom}=0,
		\end{equation*}
		and where we have defined
		\begin{align}
				\hu^{m+1} &= 2\tu^{m+1}-\tu^m,\label{eq:averages}\\
			f^m(x)&=\frac{1}{\Delta t}\int_{t^{m-1/2}}^{t^{m+1/2}}f(t,x)dt,\quad m=0,\dots, N,\quad t^{m\pm 1/2}:=t^m\pm\frac{\Delta t}{2}.\notag
		\end{align} 
		\item[{\bf Step 2}] (Projection step): Next we define $(u^{m+1},p^{m+1})$ via
		\begin{subequations}
			\label{eq:projection}
			\begin{align}
				3\frac{u^{m+1}-\tu^{m+1}}{2\Delta t }&= -\Grad (p^{m+1}-p^m),\\
				\Div u^{m+1} & = 0,
			\end{align}	
		\end{subequations}
		with boundary condition
		\begin{equation}
			\label{eq:bcstep2}
			\left.u^{m+1}\cdot n\right|_{\partial \dom}=0.
		\end{equation}
	\end{enumerate}
	If $f$ is not defined for $t>T$, we set $f\equiv 0$ for $t>T$. One can check that~\eqref{eq:averages} is a second order accurate extrapolation at time $t^{m+1}$.
	This algorithm is sometimes called the incremental pressure-correction scheme. 
	We combine this with an arbitrary-order finite element spatial discretization. Our spatial discretization requires certain stability properties of the corresponding $L^2$-orthogonal projection that are satisfied on quasi-uniform triangulations, the same conditions employed in~\cite{chorinprojection}.
	
	The key technical ingredients of our analysis include:
	\begin{itemize}
		\item \emph{Discrete energy estimates} that yield uniform a priori $L^\infty(0,T;L^2(\Omega))$ and $L^2(0,T;H^1_0(\Omega))$ bounds on appropriately defined interpolations of the computed approximations.
		\item \emph{Consistency of velocity approximations:} We establish that different interpolations based on the intermediate and final velocity fields converge to the same weak limit (up to subsequence).
		\item \emph{Strong convergence via compactness:} To handle the nonlinearity in~\eqref{eq:NS}, we prove that one of the approximating sequences has a strongly convergent subsequence in $L^2([0,T]\times \Omega)$ using Simon's compactness result~\cite{Simon1987}. The main technical challenge is deriving uniform time continuity estimates for the approximation sequence, which we achieve by combining techniques from~\cite{Eymard2024,chorinprojection} with our discrete energy estimates.
		\item \emph{Convergence to Leray-Hopf solutions:} Finally,  we prove that a subsequence of our approximations converges to a Leray-Hopf solution of the Navier-Stokes equations~\eqref{eq:NS}.
	\end{itemize}
	
	Other second order discretizations of the projection method besides~\eqref{eq:step1semi}--\eqref{eq:projection} are available: Firstly, instead of choosing a BDF2 time discretization, one could use a Crank-Nicolson type scheme, see e.g.~\cite{E1995}. One of the crucial advantages of using BDF2 in our case is that the convective term $(\hu^{m+1}\cdot\Grad)\tu^{m+1}$ allows for a spatial discretization with a skew symmetric trilinear form that vanishes when taking $\tu^{m+1}$ as a test function, and therefore allows for the previously mentioned discrete energy estimate. It is unclear whether this is possible for a Crank-Nicolson scheme with finite element spatial discretization. Secondly, one of the disadvantages of using the incremental formulation of the projection method chosen here is that it imposes an artificial Neumann boundary condition on the pressure which leads to lower provable convergence rates (in the smooth setting) for the pressure variables~\cite{Shen1996,E1995,Guermond1999,Prohl2008,Carelli2009}. This can be overcome by using the scheme proposed by Kim and Moin~\cite{Kim1985} or the one proposed by Orszag, Israeli and Deville~\cite{Orszag1986}. The disadvantage of those schemes is that the boundary conditions make them inconvenient for a finit element discretization and therefore often spatial discretizations using finite differences or spectral methods are chosen instead for those schemes~\cite{Guermond2006}. For these reasons, we chose to analyze the incremental scheme using BDF2 time discretization~\eqref{eq:step1semi}--\eqref{eq:projection}.
	In particular, we prove, for the first time, convergence of a second-order accurate projection method for the incompressible Navier-Stokes equations to Leray-Hopf weak solutions, without assuming any additional regularity of the exact solution beyond the classical energy bounds. This shows that higher-order accuracy can be achieved without sacrificing rigorous convergence guarantees such as those already available for first order accurate methods.

	\subsection{Organization} The remainder of this article is structured as follows: Section~\ref{sec:prelim} introduces the necessary notation and the definition of weak solutions. Section~\ref{sec:num} presents the fully discrete numerical scheme for~\eqref{eq:NS} and establishes a priori discrete energy estimates. Section~\ref{sec:convergence} proves convergence of the scheme to a Leray-Hopf weak solution. The Appendix contains auxiliary technical results.
	
	\section{Preliminaries}\label{sec:prelim}
	In this section, we will introduce notation that we will use frequently later on, and  the definition of weak solutions for~\eqref{eq:NS} which are called Leray-Hopf solutions after Leray~\cite{Leray1934} and Hopf~\cite{Hopf1951} who first proved their existence.
	
	\subsection{Notation}

	For a Banach space $X$, we will denote its norm by  $\|\cdot\|_X$ and its dual space by $X^*$.  We will denote $L^p$ spaces (for example, $L^2(\dom)$ for square integrable functions defined over $\dom$), Sobolev spaces and Bochner spaces in standard ways, and will not distinguish between scalar, vector-valued and tensor-valued function spaces when it is clear from the context. In particular, we use $L^p(0,T; X)$ to denote the space of functions $f:[0,T]\to X$ which are $L^p$-integrable in the time variable $t\in [0,T]$ and take values in   $X$.  The inner product on $L^2(\dom)$ will be denoted by $(\cdot, \cdot)$.
	For a vector field $u:\dom\to\R^d$, we denote  the divergence by $\Div u = \sum_{j=1}^d \partial_j u^j$ and the gradient by $\Grad u = (\partial_i u^j)_{i,j=1}^d$. 
	We will use the subscript $\Div$  to indicate vector spaces containing divergence-free functions, in particular, we will use 
	\begin{equation*}
		\label{eq:sigma_space}
		\begin{aligned}
			&C_{c,\Div}^\infty(\dom)=\{{\phi}\in C_c^\infty(\dom); \Div {\phi}=0\},\\
			&L^2_{\Div}(\dom)=\{ {\phi}\in L^2(\dom):\Div  {\phi}=0,  {\phi}\cdot {n}|_{\partial\dom}=0\}=\overline{C_{c,\Div}^\infty(\dom)}^{L^2(\dom)},\\
			&H^1_{ \Div}(\dom)=H^1_0(\dom)\cap L^2_{\Div}(\dom),
		\end{aligned}
	\end{equation*}
	where we used $n$ to denote the outward normal vector of the domain $\dom$. We will use
	\begin{equation*}
		L^2_0(\dom)= \{\phi\in L^2(\dom); \, \int_{\dom} \phi dx =0\}
	\end{equation*}
to denote the space of $L^2$-functions that have zero average over the domain.
	We will also use the Leray projector $\Pl: L^2(\dom)\to L^2_{\Div}(\dom)$, which is an orthogonal projection induced by the Helmholtz-Hodge decomposition $ {f}=\nabla g+ {h}$ for any $ {f}\in L^2(\dom)$~\cite{Temam1977}. Here, $g\in H^1(\dom)$ is a scalar field, and $ {h}\in L^2_{\Div}(\dom)$ is a divergence-free vector field. It can be proved that this decomposition is unique in $L^2(\dom)$. Then for any $ {f}\in L^2(\dom)$, it holds that $\mathcal{P}{f}={h}$.  
	
	It remains to define  Leray-Hopf solutions for the incompressible Navier-Stokes equations which were introduced by Leray~\cite{Leray1934} for $\dom=\R^d$ and Hopf~\cite{Hopf1951} for bounded domains. 
	\begin{definition}[Leray-Hopf solutions of~\eqref{eq:NS}]
		\label{def:weaksol}
		Assume that $u_0\in L^2_{\Div}(\dom)$ and $f\in L^2(0,T;L^2_{\Div}(\dom))$.
		Let $u:[0,T]\times\dom\to \R^d$ be a vector field that is divergence-free, i.e., $\Div u=0$ for almost every $(t,x)\in [0,T]\times\dom$ and that satisfies
		\begin{equation}
			\label{eq:regularity}
			\begin{split}
				&u\in L^\infty(0,T;L^2_{\Div}(\dom))\cap L^2(0,T;H^1_{\Div}(\dom)), \\
				&\partial_t u \in L^{4/3}(0,T;(H^1_{\Div}(\dom))^*), 
			\end{split}
		\end{equation}
		and 
		\begin{equation}
			\label{eq:initdata}
			u(0,x) = u_0(x),\quad \text{a.e. }\, (t,x)\in [0,T]\times\dom.
		\end{equation}
		In addition, let  $u$ satisfy 
		the distributional version of~\eqref{eq:NS}:
		\begin{equation}
			\label{eq:weakformu}
			\int_0^T  ({u},\partial_t v) dt +\int_0^T\int_\dom ((u\cdot\Grad )v) \cdot u dxdt
			=-\int_0^T (f,v) dt+\mu\int_0^T  (\Grad u,\Grad v )dt,
		\end{equation}	
		for test functions $v\in C_c^\infty((0,T);C^\infty_{c,\Div}(\dom))$, and the energy inequality:
		\begin{equation}
			\label{eq:energyineq}
			\frac12\norm{u(t)}_{L^2(\dom)}^2  +\mu\int_0^t\norm{\Grad u(s)}_{L^2}^2 ds
			\leq \frac12\norm{u_0}_{L^2(\dom)}^2 + \int_0^t (f,u) ds,
		\end{equation}
		for a.e. $t\in [0,T]$. Finally, we assume that $u$ is continuous at zero in $L^2(\dom)$, i.e.,
		\begin{equation*}
			\lim_{t\to 0} \norm{u(t)-u_0}_{L^2(\dom)} = 0.
		\end{equation*}
		Then we call $u$ a \emph{Leray-Hopf solution of~\eqref{eq:NS}}.
	\end{definition}

	\section{Numerical scheme}\label{sec:num}
	The scheme is based on a linearly implicit discretization of the projection method introduced by Chorin and Temam~\cite{Chorin1967,Chorin1968,Temam1969,Temam1977}. 
	The idea of the projection method (or fractional step method) for the incompressible Navier-Stokes equations is to split the evolution of the Navier-Stokes equations~\eqref{eq:NS} into two steps: In the first step the velocity field is evolved according to the convection and the dissipation terms which may violate the divergence constraint. An intermediate velocity field $\tu$ is computed. In the second step the intermediate velocity field is projected onto divergence free fields. 

As we already stated the time discretization in~\eqref{eq:step1semi}--\eqref{eq:projection}, we continue to describe the spatial discretization.

			\subsection{Spatial discretization}\label{sec:spatial}
			We let $\mathcal{T}_h=\{K\}$ be families of conforming quasi-uniform triangulations of $\dom$ made of simplices with mesh size $h>0$. Here $h=\max_{K\in \mathcal{T}_h}\text{diam}(K)$. For simplicity, we assume that $\dom=\dom_h$, so that there is no geometric error caused by the domain approximation. We let $\Uh\subset H^1_0(\dom)^d$, and $\Ph\subset H^1(\dom)\cap L^2_0(\dom)$  be finite dimensional subspaces scaled by a meshsize $h>0$, where we use $\Uh$ as the space in which we seek the intermediate velocity $\tu^{m+1}_h$ and $\Ph$ as the space in which we seek the approximation of the pressure $p^{m+1}_h$. The final velocity $u^{m+1}_h$ is sought in the space $\Yh = \Uh + \Grad \Ph$. Note that since $\Ph\subset H^1(\dom)$, $\Grad \Ph$ is well-defined, though it may contain discontinuous functions.  Also note that the final velocity $u_h^{m+1}$ may not satisfy homogeneous Dirichlet boundary conditions.
			The finite element spaces $\Uh$ and $\Ph$ need to satisfy the following  requirements: We require the $L^2$-orthgonal projections 
			$\PUh:L^2(\dom)^d\to \Uh$, $\PPh: L^2(\dom)\to \Ph$ defined by
			\begin{equation}\label{eq:L2projection}
				(u,\PUh v) = (u,v),\quad \forall \, u\in \Uh, \quad 	(p,\PPh q) = (p,q),\quad \forall \, p\in \Ph, 
			\end{equation}
			to be stable on $H^1$ and $L^r$, $1\leq r\leq \infty$, and have  good approximation properties. In particular, we need for some constant $C>0$, independent of the mesh size $h>0$,
			\begin{subequations}
				\label{eq:L2projproperties}
				\begin{align}
					\norm{\PUh v}_{L^r(\dom)}\leq C\norm{v}_{L^r(\dom)},&\quad \forall v \in L^r(\dom),\, r\in [1,\infty],\label{eq:Lr}\\
					\left|\PUh v\right|_{H^1(\dom)}\leq C\left|v\right|_{H^1(\dom)},&\quad \forall v\in H^1(\dom),\label{eq:H1}\\
					\norm{\PUh v - v}_{L^2(\dom)}\leq C h^s \norm{v}_{H^s(\dom)},&\quad \forall v\in H^s(\dom),\, s\in \left(\frac12,k+1\right],\label{eq:L2approx}\\
					\left|v-\PUh v\right|_{H^1(\dom)}\leq C h^{s-1} |v|_{H^s(\dom)},&\quad \forall v\in H^s(\dom),\, s\in [1,k+1],\label{eq:H1approx}
				\end{align}
			\end{subequations}
			where $k\in \N$ is the order of the finite element space, and we require the same properties for the projection $\PPh$. Under the assumption of quasi-uniformity of the meshes,~\eqref{eq:L2projproperties} can be achieved for finite elements based on piecewise polynomial spaces such as
			\begin{equation*}
				\Uh = \{v\in H^1_0(\dom)\, | \, \left.v\right|_K\in P_k(K),\, \forall \, K\in \mathcal{T}_h \},
			\end{equation*}
			where $P_k(K)$ is the space of polynomials of degreee $\leq k\in \N$.  See~\cite{Douglas1975} for a proof of~\eqref{eq:Lr},~\cite[Prop 22.19]{Ern2021} for a proof of~\eqref{eq:L2approx} and~\cite[Prop. 22.21]{Ern2021} for a proof of~\eqref{eq:H1} and~\eqref{eq:H1approx}. The condition of quasi-uniformity can be weakend in some cases, see Remark 22.23 in~\cite{Ern2021}.
			
			As in~\cite{Eymard2024}, we define the space $\Vh$
			\begin{equation}
				\label{eq:spaceV}
				\Vh = \left\{v\in L^2(\dom)\, ,\, (v,\Grad q) = 0,\quad \forall q\in \Ph \right\}.
			\end{equation}
			Note that this is not a finite element space and it is not finite dimensional. We clearly have $L^2_{\Div}(\dom)\subset \Vh$. We denote the orthogonal projection from $L^2(\dom)$ onto $\Vh$ by $\PVh:L^2(\dom)\to \Vh$, i.e., we have for any $v \in L^2(\dom)$ that
			\begin{equation*}
				(\PVh v,w) = (v,w),\quad \forall w\in \Vh.
			\end{equation*}
			Moreover, $\PVh$ is the identity on $\Vh$.
			Hence,
			\begin{equation*}
				\norm{\PVh v - v}_{L^2(\dom)}^2 = \norm{\PVh v }_{L^2(\dom)}^2 + \norm{v}_{L^2(\dom)}^2 - 2(\PVh v,v) = \norm{v}_{L^2(\dom)}^2 -  \norm{\PVh v }_{L^2(\dom)}^2,
			\end{equation*}
			and it follows that
			\begin{equation*}
				\norm{\PVh v}_{L^2(\dom)}\leq \norm{v}_{L^2(\dom)}. 
			\end{equation*}
			We also have the following simple `commuting' property for $\PVh$ and $\Pl: L^2(\dom)\to L^2_{\Div}(\dom)$, the Leray projector, proved in \cite[Lemma 4.4]{chorinprojection}:
			\begin{lemma}
				\label{lem:commuting}
				We have that
				\begin{equation}\label{eq:commuting}
					\Pl \PVh v = \PVh \Pl v = \Pl v,\quad \forall \, v\in L^2(\dom).
				\end{equation}
			\end{lemma}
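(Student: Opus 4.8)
The plan is to establish the two identities separately, each of which follows quickly from the definitions of $\PVh$ and $\Pl$ together with the orthogonality structure already recorded in Section~\ref{sec:prelim}.

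First I would prove $\PVh \Pl v = \Pl v$. As noted just above, $L^2_{\Div}(\dom)\subset \Vh$ and $\PVh$ acts as the identity on $\Vh$. Since $\Pl v\in L^2_{\Div}(\dom)\subset\Vh$ for every $v\in L^2(\dom)$, applying $\PVh$ does nothing, giving $\PVh\Pl v=\Pl v$.

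Next I would prove $\Pl\PVh v = \Pl v$, i.e.\ $\Pl(v-\PVh v)=0$, which amounts to showing $v-\PVh v\perp L^2_{\Div}(\dom)$. Because $\PVh$ is the orthogonal projection of $L^2(\dom)$ onto the closed subspace $\Vh$, the residual $v-\PVh v$ lies in $\Vh^\perp$. From the definition $\Vh=(\Grad\Ph)^\perp$ we get $\Vh^\perp=\overline{\Grad\Ph}$, and since $\Ph$ is finite-dimensional, $\Grad\Ph$ is finite-dimensional, hence closed, so $\Vh^\perp=\Grad\Ph$. Thus $v-\PVh v=\Grad q$ for some $q\in\Ph\subset H^1(\dom)$. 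By the Helmholtz--Hodge decomposition recalled in Section~\ref{sec:prelim}, the range of $\Grad$ on $H^1(\dom)$ is precisely the orthogonal complement of $L^2_{\Div}(\dom)$ in $L^2(\dom)$, hence it lies in the kernel of the Leray projector: $\Pl(\Grad q)=0$. Therefore $\Pl(v-\PVh v)=0$, which is the claim, and the third equality $\PVh\Pl v=\Pl v$ in~\eqref{eq:commuting} was already shown.

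I do not anticipate a genuine obstacle here; the only points that need care are (i) identifying $\Vh^\perp$ with $\Grad\Ph$ itself rather than merely its closure, which is where finite-dimensionality of $\Ph$ enters, and (ii) using the characterization of $\ker\Pl$ as the space of $L^2$-gradients, which is exactly the content of the Helmholtz--Hodge decomposition. As an alternative bookkeeping device one could introduce the triple orthogonal decomposition $L^2(\dom)=L^2_{\Div}(\dom)\oplus\Grad\Ph\oplus W$, where $W$ is the orthogonal complement of $\Grad\Ph$ inside $\Grad H^1(\dom)$, and check that $\Pl$, $\PVh$ and $\PVh\Pl$ all project onto $L^2_{\Div}(\dom)$ with respect to it; but the direct computation above is shorter.
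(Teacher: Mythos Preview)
Your proof is correct. The paper does not actually prove this lemma here; it merely cites \cite[Lemma 4.4]{chorinprojection}, so there is no in-paper argument to compare against. Your two-step argument---using $L^2_{\Div}(\dom)\subset\Vh$ for $\PVh\Pl v=\Pl v$, and identifying $\Vh^\perp=\Grad\Ph$ (via finite-dimensionality) to conclude $\Pl(v-\PVh v)=0$---is exactly the natural route and supplies what the paper omits.
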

			Next, we define the skew-symmetric trilinear form $b$ for functions $u,v,w\in H^1(\dom)$ with $u\cdot n=0$ on $\partial\dom$ as
			\begin{equation}
				\label{eq:defb}
				b(u,v,w) = \int_{\dom} (u\cdot\Grad) v \cdot w dx +\frac12 \int_{\dom} \Div u v\cdot w dx.
			\end{equation}
			We observe that $b(u,w,v)= - b(u,v,w)$ and hence $b(u,v,v)=0$ for $u,v\in H^1(\dom)$ with $u\cdot n =0$ on $\partial\dom$ or $v=0$ on $\partial\dom$.  
			
			We start by approximating the initial data for $\tu$: Given $u_0\in L^2_{\Div}(\dom)$, we let
			\begin{equation}
				\label{eq:initdataapprox}
				\tu^0_h = \PUh u_0, 
			\end{equation} 
			where $\PUh$ is the $L^2$-orthogonal projection.
			Since $\tu_h^0$ is not necessarily divergence free, we project it onto $\Vh$ using the following projection step: Find $(u_h^0,p_h^0)\in \Yh\cap \Vh\times\Ph$ such that
			\begin{align}\label{eq:zerothstep}\begin{split}					
				\left(\frac{u^{0}_h-\tu^{0}_h}{\Delta t }, v\right)&= -(\Grad p^{0}_h, v),\\
				\left( u^{0}_h,\Grad q\right)&=0,
				\end{split}	
			\end{align}
			for all $(v,q)\in \Yh\times \Ph$.
			
			Then we compute approximations using the following iterative scheme: 
			\begin{enumerate}
				\item[{\bf Step 1}] (Prediction step):
				For any $m\geq 1$, given $u_h^{m},u^{m-1}_h \in \Yh$, $\tu_h^m,\tu_h^{m-1}\in \Uh$ and $p_h^m\in\Ph $, find $\tu_h^{m+1} \in \Uh $, such that for all $v\in \Uh $,
					\begin{equation}
						\label{eq:step1fully}
						\left(\frac{3\tu^{m+1}_h-4u^m_h+ u^{m-1}_h}{2\Delta t},v\right)+ b(\hu_h^{m+1},\tu^{m+1}_h,v)-( p^m_h,\Div v)+\mu(\Grad \tu_h^{m+1},\Grad v)= (\PUh f^{m+1}, v),
					\end{equation}
					where we denoted $\hu_h^{m+1} = 2\tu^{m+1}_h - \tu_h^m$ and
					\begin{equation*}
						f^{m+1} : = \frac{1}{\Delta t}\int_{t^{m+1/2}}^{t^{m+3/2}} f(s)ds.
					\end{equation*}
					\item[{\bf Step 2}] (Projection step): Next we seek $(u^{m+1}_h,p^{m+1}_h)\in \Yh\times\Ph$ which satisfy for all $(v,q)\in \Yh\times\Ph$,
					\begin{subequations}
						\label{eq:projectionfullydiscrete}
						\begin{align}
							\label{eq:projection1}
							\left(3\frac{u^{m+1}_h-\tu^{m+1}_h}{2\Delta t }, v\right)&= -( \Grad (p^{m+1}_h-p^m_h), v),\\
							\label{eq:projection2}
							\left(u^{m+1}_h,\Grad q\right)&=0.
						\end{align}	
					\end{subequations}
				\end{enumerate}
				\begin{remark}\label{rem:1ststep}
					Since for the first step at $m=0$, the approximations at $m-1=-1$ are not defined, one can either use extrapolation to define $u$ at $t=-\Delta t$, or use a first order method to obtain $\tu^1_h,u^1_h,p^1_h$ as follows, as it was done for example in~\cite[Section 3]{Guermond2006}:
					First compute $\tu^1_h\in \Uh$ by
					\begin{equation}
						\label{eq:step1fully1step}
						\left(\frac{\tu^{1}_h-u^0_h}{\Delta t},v\right)+ b(\tu_h^{0},\tu^{1}_h,v)-( p^0_h,\Div v)+\mu(\Grad \tu_h^{1},\Grad v)= (\PUh f^{1}, v),\quad \forall v\in \Uh.
					\end{equation}
					Then obtain $u^1_h\in \Yh$ via the following projection step
					\begin{subequations}
						\label{eq:projectionfullydiscretestep1}
						\begin{align}
							\label{eq:projection1step1}
							\left(\frac{u^{1}_h-\tu^{1}_h}{\Delta t }, v\right)&= -( \Grad (p^{1}_h-p^0_h), v),\quad \forall v\in \Yh\\
							\label{eq:projection2step1}
							\left(u^{1}_h,\Grad q\right)&=0,	\quad \forall q\in \Ph.
						\end{align}	
					\end{subequations}
					In the following analysis, we will assume that this approach has been taken, though we believe that it can be adapted to different approaches for the first time step. For example, one could alternatively define $\tu^{-1} = u_0+\Delta t((u_0\cdot \Grad ) u_0 - \mu \Delta u_0)$ and then $u_h^{-1}$ by
					\begin{equation*}
						\begin{split}
							(\tu^{-1}-u_h^{-1},v)+\Delta t (\Grad p^{-1}_h,v)&= 0,\quad \forall \, v\in \Yh,\\
							(u_h^{-1},\Grad q)& = 0,\quad \forall\, q\in \Ph,
						\end{split}
					\end{equation*}
					and then do the first time step using~\eqref{eq:step1fully}--\eqref{eq:projectionfullydiscrete}. However, this strategy requires $u_0\in H^2(\dom)$.
			\end{remark}
					\begin{remark}\label{rem:PVhonUh}
					We note that restricted to $\Yh$, $\PVh$ corresponds exactly to the projection onto $\mathbb{H}_h= \ker(C_h)$, where the operator $C_h:\Yh\to \Ph$ is defined as in~\cite{Guermond1998}, by $(C_h u,q)=(u,\Grad q)$ for all $q\in \Ph$, i.e., one writes~\eqref{eq:projection1}--\eqref{eq:projection2} as
					\begin{equation*}
						\begin{split}
							3\frac{u^{m+1}_h-\tu^{m+1}_h}{2\Delta t } & = -C_h^\top (p^{m+1}_h-p_h^m),\\
							C_h u^{m+1}_h & = 0.
						\end{split}
					\end{equation*}
					Thus $\PVh$ restricted to $\Yh$ is exactly the projection step~\eqref{eq:projectionfullydiscrete} of the numerical scheme. In particular,  $\PVh \tu_h^{m+1} = u^{m+1}_h$. 
				\end{remark}
				\begin{remark}[Formulation of projection step as a Poisson problem]
					The projection step can be rewritten and solved as a Poisson problem as follows: Taking $\Grad q\in \Grad\Ph$ as a test function in~\eqref{eq:projection1} and using the weak divergence constraint~\eqref{eq:projection2}, we see that $p_h^{m+1}\in \Ph$ can be computed from
					\begin{equation}\label{eq:poisson}
						(\Grad p^{m+1}_h,\Grad q) = (\Grad p^m_h,\Grad q)-\left(\frac{3\Div \tu^{m+1}_h}{2\Delta t},q\right),
					\end{equation}
					and then $u_h^{m+1}$ can be computed via~\eqref{eq:projection1}.
				\end{remark}
				
				\subsection{Energy stability}
				Before showing that the scheme~\eqref{eq:step1fully}--\eqref{eq:projectionfullydiscrete} has a unique solution for every time step $m$, i.e., it is solvable, we show that any solution of the scheme satisfies a discrete energy bound. This is necessary to obtain stability of the scheme and to derive a priori estimates on the approximations that allow to pass to the limit in the mesh size $h$ and the time step $\Delta t$ and show convergence of the scheme. 
					\begin{lemma}[Discrete energy estimate]
					\label{lem:discenergy}
					The approximations computed by the scheme~\eqref{eq:step1fully}--\eqref{eq:projectionfullydiscrete} satisfy for any integer $M\in \left\{1,2,\dots, \frac{T}{\Delta t}\right\}$
					\begin{equation}
						\label{eq:energybalance}
						\begin{split}
						&E^M_h  	+\sum_{m=1}^{M-1}\norm{u^{m+1}_h-2u^m_h+ u^{m-1}_h}_{L^2}^2+ 2\sum_{m=1}^{M-1}\norm{\tu^{m+1}_h-u^{m+1}_h}_{L^2}^2 \\
						&\qquad 
						   + 4 \mu\Delta t \sum_{m=0}^{M-1}\norm{\Grad \tu^{m+1}_h}_{L^2}^2+3\norm{\tu^1_h-u^0_h}_{L^2}^2\\
						   &						   	\leq Ce^{3M\Delta t}\left(\norm{u_0}_{L^2}^2+ \Delta t\sum_{m=0}^{M-1}\norm{f^{m+1}}_{L^2}^2\right),
\end{split}
					\end{equation}
					where 
					\begin{equation*}
						E_h^M = \norm{u^{M}_h}_{L^2}^2 + \norm{2 u^M_h - u^{M-1}_h}_{L^2}^2+ \frac{4\Delta t^2}{3}\norm{\Grad p^{M}_h}_{L^2}^2 .
					\end{equation*}
				\end{lemma}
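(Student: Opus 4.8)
The plan is to test the prediction step~\eqref{eq:step1fully} with its own solution $\tu_h^{m+1}$ and to exploit the incremental structure of the projection step~\eqref{eq:projectionfullydiscrete}, so that every pressure contribution becomes a telescoping difference of $\norm{\Grad p_h^m}_{L^2}^2$; after summing in $m$ the left-hand side then collapses to $E_h^M$ plus nonnegative remainders. Choosing $v=\tu_h^{m+1}\in\Uh$ in~\eqref{eq:step1fully}, the convective term vanishes, $b(\hu_h^{m+1},\tu_h^{m+1},\tu_h^{m+1})=0$, since $\tu_h^{m+1}\in H^1_0(\dom)^d$ --- this is the structural point that makes the BDF2 form usable --- and, using $(\PUh f^{m+1},\tu_h^{m+1})=(f^{m+1},\tu_h^{m+1})$, we are left with
\[
\Bigl(\frac{3\tu^{m+1}_h-4u^m_h+u^{m-1}_h}{2\Delta t},\,\tu_h^{m+1}\Bigr)-(p^m_h,\Div\tu_h^{m+1})+\mu\norm{\Grad\tu_h^{m+1}}_{L^2}^2=(f^{m+1},\tu_h^{m+1}).
\]
Testing~\eqref{eq:projection1} against $v\in\Yh$ and observing that $u_h^{m+1}-\tu_h^{m+1}$ and $\Grad(p_h^{m+1}-p_h^m)$ both lie in $\Yh=\Uh+\Grad\Ph$ gives the pointwise identity $u_h^{m+1}=\tu_h^{m+1}-\tfrac{2\Delta t}{3}\Grad(p_h^{m+1}-p_h^m)$; together with~\eqref{eq:projection2} (so that $(u_h^{m+1},\Grad q)=0$ for all $q\in\Ph$) this yields $\norm{\tu_h^{m+1}}_{L^2}^2=\norm{u_h^{m+1}}_{L^2}^2+\norm{\tu_h^{m+1}-u_h^{m+1}}_{L^2}^2$ and $\norm{\tu_h^{m+1}-u_h^{m+1}}_{L^2}^2=\tfrac{4\Delta t^2}{9}\norm{\Grad(p_h^{m+1}-p_h^m)}_{L^2}^2$ (the same facts underlie $u_h^{m+1}=\PVh\tu_h^{m+1}$ from Remark~\ref{rem:PVhonUh}).

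Next I rewrite the two difference terms. Writing $3\tu_h^{m+1}-4u_h^m+u_h^{m-1}=(3u_h^{m+1}-4u_h^m+u_h^{m-1})+3(\tu_h^{m+1}-u_h^{m+1})$, pairing with $\tu_h^{m+1}$, and using that $u_h^{m+1},u_h^m,u_h^{m-1}$ each satisfy the discrete divergence constraint (the last of them via~\eqref{eq:zerothstep}, \eqref{eq:projectionfullydiscretestep1} or~\eqref{eq:projection2} according to whether $m=1$, $m=2$, or $m\ge3$), all cross terms containing $\Grad(p_h^{m+1}-p_h^m)$ drop out, so the discrete time derivative equals $\tfrac{1}{2\Delta t}\bigl[(3u_h^{m+1}-4u_h^m+u_h^{m-1},u_h^{m+1})+3\norm{\tu_h^{m+1}-u_h^{m+1}}_{L^2}^2\bigr]$; to the pure BDF2 part I apply the algebraic identity
\[
(3a-4b+c,a)=\tfrac12\bigl(\norm{a}_{L^2}^2+\norm{2a-b}_{L^2}^2-\norm{b}_{L^2}^2-\norm{2b-c}_{L^2}^2\bigr)+\tfrac12\norm{a-2b+c}_{L^2}^2
\]
with $a=u_h^{m+1}$, $b=u_h^m$, $c=u_h^{m-1}$. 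For the pressure term I integrate by parts ($\tu_h^{m+1}=0$ on $\partial\dom$), substitute the identity for $\tu_h^{m+1}$, discard the vanishing $(\Grad p_h^m,u_h^{m+1})$, and use $2(x,y-x)=\norm{y}_{L^2}^2-\norm{x}_{L^2}^2-\norm{y-x}_{L^2}^2$ to obtain $-(p_h^m,\Div\tu_h^{m+1})=\tfrac{\Delta t}{3}\bigl(\norm{\Grad p_h^{m+1}}_{L^2}^2-\norm{\Grad p_h^m}_{L^2}^2-\norm{\Grad(p_h^{m+1}-p_h^m)}_{L^2}^2\bigr)$.

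Multiplying the resulting per-step identity by $4\Delta t$ and replacing $\norm{\Grad(p_h^{m+1}-p_h^m)}_{L^2}^2$ by $\tfrac{9}{4\Delta t^2}\norm{\tu_h^{m+1}-u_h^{m+1}}_{L^2}^2$ merges the two sources of $\norm{\tu_h^{m+1}-u_h^{m+1}}_{L^2}^2$ into the single term $3\norm{\tu_h^{m+1}-u_h^{m+1}}_{L^2}^2$; summing over $m=1,\dots,M-1$ then telescopes $\norm{u_h^m}_{L^2}^2+\norm{2u_h^m-u_h^{m-1}}_{L^2}^2$ and $\tfrac{4\Delta t^2}{3}\norm{\Grad p_h^m}_{L^2}^2$, producing $E_h^M$ on the left and $\norm{u_h^1}_{L^2}^2+\norm{2u_h^1-u_h^0}_{L^2}^2+\tfrac{4\Delta t^2}{3}\norm{\Grad p_h^1}_{L^2}^2+4\Delta t\sum_{m=1}^{M-1}(f^{m+1},\tu_h^{m+1})$ on the right. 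The first time step~\eqref{eq:step1fully1step}--\eqref{eq:projectionfullydiscretestep1} with~\eqref{eq:zerothstep} is handled by the same but simpler (first-order) computation: testing~\eqref{eq:step1fully1step} with $\tu_h^1$, using $u_h^1=\tu_h^1-\Delta t\Grad(p_h^1-p_h^0)$, $\norm{u_h^0}_{L^2}\le\norm{u_0}_{L^2}$ and $\Delta t^2\norm{\Grad p_h^0}_{L^2}^2=\norm{u_h^0-\tu_h^0}_{L^2}^2\le\norm{u_0}_{L^2}^2$, one bounds $\norm{u_h^1}_{L^2}^2+\norm{2u_h^1-u_h^0}_{L^2}^2+\tfrac{4\Delta t^2}{3}\norm{\Grad p_h^1}_{L^2}^2+4\mu\Delta t\norm{\Grad\tu_h^1}_{L^2}^2+3\norm{\tu_h^1-u_h^0}_{L^2}^2$ by $C(\norm{u_0}_{L^2}^2+\Delta t\norm{f^1}_{L^2}^2)$, which supplies the extra $m=0$ contributions in~\eqref{eq:energybalance}. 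Finally, by Cauchy--Schwarz, Young's inequality with parameter $3/4$, and the orthogonal decomposition of $\norm{\tu_h^{m+1}}_{L^2}^2$,
\[
4\Delta t\,(f^{m+1},\tu_h^{m+1})\le\tfrac{4\Delta t}{3}\norm{f^{m+1}}_{L^2}^2+3\Delta t\,\norm{u_h^{m+1}}_{L^2}^2+3\Delta t\,\norm{\tu_h^{m+1}-u_h^{m+1}}_{L^2}^2,
\]
so that for $\Delta t$ below a fixed threshold the last term is absorbed into the left (leaving the coefficient $2$ appearing in~\eqref{eq:energybalance}), $\norm{u_h^{m+1}}_{L^2}^2\le E_h^{m+1}$, and the discrete Grönwall inequality delivers~\eqref{eq:energybalance} with the factor $e^{3M\Delta t}$.

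I expect the main obstacle to be the bookkeeping in the last two paragraphs: the quantity $\norm{\tu_h^{m+1}-u_h^{m+1}}_{L^2}^2$ is produced \emph{positively} when $\tu_h^{m+1}$ is replaced by $u_h^{m+1}$ in the BDF2 term and \emph{negatively} through the pressure increment, and one has to check that, after rewriting the pressure increment as $\tfrac{9}{4\Delta t^2}\norm{\tu_h^{m+1}-u_h^{m+1}}_{L^2}^2$, the net coefficient stays positive and large enough to survive the forcing absorption, while the telescoped pressure-gradient energy lands on the left with exactly the constant $\tfrac{4\Delta t^2}{3}$; carrying the first time step consistently through the same chain of identities is the remaining delicate point.
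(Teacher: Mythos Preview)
Your proposal is correct and follows essentially the same route as the paper: both test~\eqref{eq:step1fully} with $\tu_h^{m+1}$, rewrite the BDF2 term via the identity $2(3a-4b+c,a)=\|a\|^2+\|2a-b\|^2-\|b\|^2-\|2b-c\|^2+\|a-2b+c\|^2$, turn the pressure contribution into a telescoping sum of $\tfrac{4\Delta t^2}{3}\|\Grad p_h^m\|^2$, treat the first step with the analogous first-order computation, and close with the discrete Gr\"onwall lemma; your derivation via the pointwise identity $u_h^{m+1}=\tu_h^{m+1}-\tfrac{2\Delta t}{3}\Grad(p_h^{m+1}-p_h^m)$ and orthogonality is equivalent to the paper's choice of particular test functions in~\eqref{eq:projection1} and yields the same per-step identity. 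One small slip: with your Young parameter $3/4$ the Gr\"onwall constant is $\nu=3$, giving $(1-3\Delta t)^{-M}$ rather than $e^{3M\Delta t}$; taking the symmetric split $4\Delta t(f^{m+1},\tu_h^{m+1})\le 2\Delta t\|f^{m+1}\|^2+2\Delta t\|\tu_h^{m+1}\|^2$ (as the paper does) gives $\nu=2$ and the stated exponential.
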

				
				\begin{proof}
				We start by observing that the following identities holds:
				\begin{align*}
					&2(3\tu^{m+1}_h -4u^m_h +u^{m-1}_h,\tu^{m+1}_h)\\
					& = 2(3u^{m+1}_h -4u^m_h +u^{m-1}_h,\tu^{m+1}_h) + 6(\tu^{m+1}_h-u^{m+1}_h,\tu^{m+1}_h)\\
					& = 2(3u^{m+1}_h -4u^m_h +u^{m-1}_h,\tu^{m+1}_h-u^{m+1}_h) +2(3u^{m+1}_h -4u^m_h +u^{m-1}_h,u^{m+1}_h) + 6(\tu^{m+1}_h-u^{m+1}_h,\tu^{m+1}_h).
				\end{align*}
				Taking $v=3u^{m+1}_h-4 u^m_h+u^{m-1}_h$ as a test function in~\eqref{eq:projection1} and using~\eqref{eq:projection2}, we see that 
				\begin{equation*}
					2(3u^{m+1}_h -4u^m_h +u^{m-1}_h,\tu^{m+1}_h-u^{m+1}_h) =0.
				\end{equation*}
				Thus, the previous identity simplifies to
				\begin{align*}
					2(3\tu^{m+1}_h -4u^m_h +u^{m-1}_h,\tu^{m+1}_h)& = 2(3u^{m+1}_h -4u^m_h +u^{m-1}_h,u^{m+1}_h) + 6(\tu^{m+1}_h-u^{m+1}_h,\tu^{m+1}_h)\\
					& = \norm{u^{m+1}_h}_{L^2}^2 - \norm{u^m_h}_{L^2}^2 +\norm{2u^{m+1}_h-u^m_h}_{L^2}^2-\norm{2 u^m_h - u^{m-1}_h}_{L^2}^2\\
					& +\norm{u^{m+1}_h-2u^m_h+ u^{m-1}_h}_{L^2}^2+ 3\norm{\tu^{m+1}_h}_{L^2}^2 - 3\norm{u^{m+1}_h}_{L^2}^2 + 3\norm{\tu^{m+1}_h-u^{m+1}_h}_{L^2}^2.
				\end{align*}
				Taking $v=4 \tu^{m+1}_h$ as a test function in~\eqref{eq:step1fully} and using this identity, as well as the skew-symmetry of $b$, we obtain
				\begin{multline}\label{eq:energytempx}
					\frac{1}{\Delta t}\Big(\norm{u^{m+1}_h}_{L^2}^2 - \norm{u^m_h}_{L^2}^2 +\norm{2u^{m+1}_h-u^m_h}_{L^2}^2-\norm{2 u^m_h - u^{m-1}_h}_{L^2}^2\\
					 +\norm{u^{m+1}_h-2u^m_h+ u^{m-1}_h}_{L^2}^2+ 3\norm{\tu^{m+1}_h}_{L^2}^2 - 3\norm{u^{m+1}_h}_{L^2}^2 + 3\norm{\tu^{m+1}_h-u^{m+1}_h}_{L^2}^2\Big) \\
					+ 4(\Grad p_h^m,\tu^{m+1}_h) +4 \mu(\Grad \tu^{m+1}_h,\Grad \tu^{m+1}_h)=4(\PUh f^{m+1},\tu^{m+1}_h).
				\end{multline}
				Next, we use the second step of the scheme to find an estimate for the term $4(\Grad p^m_h,\tu^{m+1}_h)$. We take $v = 2(u^{m+1}_h+\tu^{m+1}_h)+\frac{4\Delta t}{3}(\Grad p^{m+1}_h+\Grad p^m_h)$ as a test function in~\eqref{eq:projection1} and simplify:
				\begin{align*}
					0& = \frac{3}{4\Delta t}\left(2(u^{m+1}_h-\tu^{m+1}_h)+\frac{4\Delta t}{3}\Grad(p^{m+1}_h-p^m_h),2(u^{m+1}_h+\tu^{m+1}_h)+\frac{4\Delta t}{3}\Grad(p^{m+1}_h+p^m_h)\right)\\
					&=\frac{3}{4\Delta t}\norm{2u^{m+1}_h+\frac{4\Delta t}{3}\Grad p^{m+1}_h}_{L^2}^2 - \frac{3}{4\Delta t}\norm{2 \tu^{m+1}_h + \frac{4\Delta t}{3}\Grad p^{m}_h}_{L^2}^2\\
					&= \frac{3}{\Delta t}\norm{u^{m+1}_h}_{L^2}^2+\frac{4\Delta t}{3}\norm{\Grad p^{m+1}_h}_{L^2}^2 -  \frac{3}{\Delta t}\norm{\tu^{m+1}_h}_{L^2}^2-\frac{4\Delta t}{3}\norm{\Grad p^{m}_h}_{L^2}^2 - 4(\tu^{m+1}_h,\Grad p^m)
				\end{align*}
				where we also used~\eqref{eq:projection2} in the last line. Plugging this identity into~\eqref{eq:energytempx}, we obtain
					\begin{multline}\label{eq:energytempx2}
					\frac{1}{\Delta t}\Big(\norm{u^{m+1}_h}_{L^2}^2 - \norm{u^m_h}_{L^2}^2 +\norm{2u^{m+1}_h-u^m_h}_{L^2}^2-\norm{2 u^m_h - u^{m-1}_h}_{L^2}^2\\
					+\norm{u^{m+1}_h-2u^m_h+ u^{m-1}_h}_{L^2}^2+ 3\norm{\tu^{m+1}_h-u^{m+1}_h}_{L^2}^2\Big) \\
					+ \frac{4\Delta t}{3}\left(\norm{\Grad p^{m+1}_h}_{L^2}^2 - \norm{\Grad p^m_h}_{L^2}^2\right)  + 4\mu(\Grad \tu^{m+1}_h,\Grad \tu^{m+1}_h)=4(  f^{m+1},\tu^{m+1}_h),
				\end{multline}
				where we have also used~\eqref{eq:L2projection} for the last term.
				Multiplying~\eqref{eq:energytempx2} by $\Delta t$ and summing over $m=1,\dots, M-1$, we obtain 
				\begin{multline}\label{eq:prelimenergyestimate}
					E^M_h  	+\sum_{m=1}^{M-1}\norm{u^{m+1}_h-2u^m_h+ u^{m-1}_h}_{L^2}^2+ 3\sum_{m=1}^{M-1}\norm{\tu^{m+1}_h-u^{m+1}_h}_{L^2}^2 
					+ 4 \mu\Delta t \sum_{m=1}^{M-1}\norm{\Grad \tu^{m+1}_h}_{L^2}^2\\
					= E_h^1 +4 \Delta t\sum_{m=1}^{M-1}( f^{m+1},\tu^{m+1}_h).
				\end{multline}
				In order to derive an energy bound from this, we need to estimate $E_h^1$ which is given by
				\begin{equation*}
					E_h^1 = \norm{u^1_h}_{L^2}^2 + \norm{2u^1_h -u^0_h}_{L^2}^2+\frac{4\Delta t^2}{3}\norm{\Grad p^1_h}_{L^2}^2 \leq 7 \norm{u^1_h}_{L^2}^2 + 3\norm{u^0_h}_{L^2}^2 +\frac{4\Delta t^2}{3}\norm{\Grad p^1_h}_{L^2}^2,
				\end{equation*}
				thus we need an estimate on the $L^2$-norms of $u^1_h, u^0_h$ and $\Delta t \Grad p^1_h$ which can be obtained from analyzing the first and the zero'th time step, that were computed by using a backward Euler method, cf. Remark~\ref{rem:1ststep}, and by projection~\eqref{eq:zerothstep}. We start with the zero'th step: Taking $v=\Delta t (u^0_h+\Delta t \Grad p_h^0+\tu_h^0)$ as a test function in the first equation in~\eqref{eq:zerothstep} and using that $u^0_h$ satisfies the weak divergence constraint, we find
				\begin{equation*}
					0=(u^0_h-\tu^0_h+\Delta t \Grad p_h^0,u^0_h+\tu^0_h +\Delta t \Grad p_h^0)  =\norm{u_h^0}_{L^2}+\Delta t^2 \norm{\Grad p_h^0}_{L^2}^2 -\norm{\tu_h^0}_{L^2}^2
				\end{equation*}
				which implies that
				\begin{equation}\label{eq:u0estimate}
					\norm{u^0_h}_{L^2}^2+\Delta t^2\norm{\Grad p_h^0}_{L^2}^2=\norm{\tu^0_h}_{L^2}^2 = \norm{\PUh u_0}_{L^2}^2 \leq \norm{u_0}_{L^2}^2 \leq C.
				\end{equation}
				Next, we obtain a uniform in $h$ and $\Delta t$ bound on $u^1_h$ and $\Delta t \Grad p^1_h$ by using the first step~\eqref{eq:step1fully1step}--\eqref{eq:projection2step1}.
		This is in fact done in a very similar way. Taking $v=2\tu^1_h$ as a test function in~\eqref{eq:step1fully1step}, we have
				\begin{equation}\label{eq:firststepenergy}
					\frac{1}{\Delta t}\left(\norm{\tu^1_h}_{L^2}^2 + \norm{\tu^1_h-u^0_h}_{L^2}^2 - \norm{u^0_h}_{L^2}^2\right) - 2(p^0_h,\Div \tu^1_h)+2\mu \norm{\Grad \tu^1_h}_{L^2}^2 = 2(f^1,\tu^1_h).
				\end{equation}
				Then, taking $v=\frac{u^1_h+\tu^1_h}{\Delta t}+ \Grad(p^1_h + p^0_h)$ as a test function in~\eqref{eq:projection1step1} and using that $u^1_h$ is weakly divergence free, i.e., satisfies~\eqref{eq:projection2step1}, we have
				\begin{equation*}
					0 = \frac{1}{\Delta t^2}\left(\norm{u^1_h}_{L^2}^2 - \norm{\tu^1_h}_{L^2}^2\right)+\norm{\Grad p^1_h}_{L^2}^2 -\norm{\Grad p^0_h}_{L^2}^2 -\frac{2}{\Delta t}(\tu^1_h,\Grad p^0_h).
				\end{equation*}
				Plugging this into~\eqref{eq:firststepenergy} (after multiplying by $\Delta t$), we obtain
				\begin{equation}\label{eq:firststepenergy2}
					\frac{1}{\Delta t}\left(\norm{u^1_h}_{L^2}^2 + \norm{\tu^1_h-u^0_h}_{L^2}^2 - \norm{u^0_h}_{L^2}^2\right) +\Delta t\left(\norm{\Grad p^1_h}_{L^2}^2 -\norm{\Grad p^0_h}_{L^2}^2\right) +2\mu \norm{\Grad \tu^1_h}_{L^2}^2  
					= 2(f^1,\tu^1_h).
				\end{equation}
Adding this $7\Delta t$ times to~\eqref{eq:prelimenergyestimate}, we obtain
	\begin{multline}\label{eq:prelimenergyestimate2}
	E^M_h  	+\sum_{m=1}^{M-1}\norm{u^{m+1}_h-2u^m_h+ u^{m-1}_h}_{L^2}^2+ 3\sum_{m=1}^{M-1}\norm{\tu^{m+1}_h-u^{m+1}_h}_{L^2}^2 
	+ 4 \mu\Delta t \sum_{m=0}^{M-1}\norm{\Grad \tu^{m+1}_h}_{L^2}^2+ 7\norm{\tu^1_h-u^0_h}_{L^2}^2\\
	\leq 10\norm{u_h^0}_{L^2}^2+ 7\Delta t^2\norm{\Grad p^0_h}_{L^2}^2 +4 \Delta t\sum_{m=0}^{M-1}( f^{m+1},\tu^{m+1}_h) + 3\Delta t(f^1,\tu_h^1)\\
	\leq 10\norm{u_0}_{L^2}^2 +4 \Delta t\sum_{m=0}^{M-1}( f^{m+1},\tu^{m+1}_h) + 3\Delta t(f^1,\tu_h^1).
\end{multline}		
				It remains to estimate the source term involving $f$ which will be done using a discrete version of Gr\"onwall's inequality, Lemma~\ref{lem:discretegronwall}.
						We first estimate with Young's inequality
				\begin{equation*}
					\left|(f^{m+1},\tu_h^{m+1})\right|\leq \frac{1}{2}\left(\norm{f^{m+1}}_{L^2}^2+\norm{\tu_h^{m+1}}_{L^2}^2\right),
				\end{equation*}
				and then use $v=u^{m+1}_h$ as a test function in~\eqref{eq:projection1} which yields with the weak divergence constraint~\eqref{eq:projection2},
				\begin{equation}\label{eq:utildebound}
					0= (u^{m+1}_h - \tu^{m+1}_h,u^{m+1}_h) = \frac{1}{2}\norm{u^{m+1}_h}_{L^2}^2 -\frac{1}{2}\norm{\tu^{m+1}_h}_{L^2}^2 + \frac12\norm{u^{m+1}_h - \tu^{m+1}_h}_{L^2}^2.
				\end{equation}
				Using this in the previous estimate, we obtain,
				\begin{equation*}
					\left|(f^{m+1},\tu_h^{m+1})\right|\leq \frac{1}{2}\left(\norm{f^{m+1}}_{L^2}^2+\norm{u_h^{m+1}}_{L^2}^2+ \norm{u^{m+1}_h-\tu^{m+1}_h}_{L^2}^2\right).
				\end{equation*}
					For the $m=0$ term, we estimate instead using the triangle inequality,
				\begin{equation*}
					\left|(f^{1},\tu_h^{1})\right|\leq \frac{1}{2}\left(\norm{f^{1}}_{L^2}^2+\norm{\tu_h^{1}}_{L^2}^2\right)\leq \frac{1}{2}\norm{f^{1}}_{L^2}^2+\norm{u_h^{0}}_{L^2}^2 +\norm{u^0_h-\tu_h^1}_{L^2}^2.
				\end{equation*}
				Thus, we can estimate in~\eqref{eq:prelimenergyestimate2}
				\begin{multline*}
					E^M_h  	+\sum_{m=1}^{M-1}\norm{u^{m+1}_h-2u^m_h+ u^{m-1}_h}_{L^2}^2+ 3\sum_{m=1}^{M-1}\norm{\tu^{m+1}_h-u^{m+1}_h}_{L^2}^2 
					+ 4 \mu\Delta t \sum_{m=0}^{M-1}\norm{\Grad \tu^{m+1}_h}_{L^2}^2+ 7\norm{\tu^1_h-u^0_h}_{L^2}^2\\
					\leq 10\norm{u_0}_{L^2}^2 +2 \Delta t\sum_{m=1}^{M-1}\left(\norm{f^{m+1}}^2_{L^2}+\norm{u^{m+1}_h}_{L^2}^2+\norm{u^{m+1}_h - \tu_h^{m+1}}_{L^2}^2\right) \\
					+ 7\Delta t\left(\frac12 \norm{f^1}_{L^2}^2 +\norm{u_0}_{L^2}^2 + \norm{u_h^0-\tu_h^1}_{L^2}^2\right),
				\end{multline*}		
				which we can recast to
					\begin{multline*}
					E^M_h  	+\sum_{m=1}^{M-1}\norm{u^{m+1}_h-2u^m_h+ u^{m-1}_h}_{L^2}^2+ (3-2\Delta t)\sum_{m=1}^{M-1}\norm{\tu^{m+1}_h-u^{m+1}_h}_{L^2}^2 
					\\
				+ 4 \mu\Delta t \sum_{m=0}^{M-1}\norm{\Grad \tu^{m+1}_h}_{L^2}^2	+ 7(1-\Delta t)\norm{\tu^1_h-u^0_h}_{L^2}^2\\
					\leq 10\norm{u_0}_{L^2}^2 +2 \Delta t\sum_{m=2}^{M}\left(\norm{f^{m}}^2_{L^2}+E_h^m\right)
					+ 7\Delta t\left(\frac12 \norm{f^1}_{L^2}^2 +\norm{u_0}_{L^2}^2 \right).
				\end{multline*}		
				We apply the discrete Gr\"onwall inequality, Lemma~\ref{lem:discretegronwall} to this identity to obtain with $\nu = 2$ and 
				\begin{equation*}
					a_m = E^m_h,\quad b_m = 10\norm{u_0}_{L^2}^2 +2 \Delta t\sum_{m=1}^{M}\norm{f^{m}}^2_{L^2}+  \Delta t\left( 1.5\norm{f^1}_{L^2}^2 +7\norm{u_0}_{L^2}^2 \right),
				\end{equation*}
				which is non-decreasing, to obtain
				\begin{multline*}
					E^M_h  	+\sum_{m=1}^{M-1}\norm{u^{m+1}_h-2u^m_h+ u^{m-1}_h}_{L^2}^2+ (3-2\Delta t)\sum_{m=1}^{M-1}\norm{\tu^{m+1}_h-u^{m+1}_h}_{L^2}^2 
					\\
					+ 4 \mu\Delta t \sum_{m=0}^{M-1}\norm{\Grad \tu^{m+1}_h}_{L^2}^2	+ 7(1-\Delta t)\norm{\tu^1_h-u^0_h}_{L^2}^2\\
					\leq \left(10\norm{u_0}_{L^2}^2 +2 \Delta t\sum_{m=1}^{M}\norm{f^{m}}^2_{L^2}+  \Delta t\left( 1.5\norm{f^1}_{L^2}^2 +7\norm{u_0}_{L^2}^2 \right)\right) (1-2\Delta t)^{-M}.
				\end{multline*}
				For $\Delta t>0$ small enough, we have $(1-2\Delta t)^{-1}\leq 1+3\Delta t$ and $2\Delta t\leq 1$ and  hence, we can bound the right hand side (using also $1+x\leq \exp(x)$) by
					\begin{multline*}
					E^M_h  	+\sum_{m=1}^{M-1}\norm{u^{m+1}_h-2u^m_h+ u^{m-1}_h}_{L^2}^2+ 2\sum_{m=1}^{M-1}\norm{\tu^{m+1}_h-u^{m+1}_h}_{L^2}^2 
					\\
					+ 4 \mu\Delta t \sum_{m=0}^{M-1}\norm{\Grad \tu^{m+1}_h}_{L^2}^2	+ 3\norm{\tu^1_h-u^0_h}_{L^2}^2\\
					\leq C\left( \norm{u_0}_{L^2}^2 +  \Delta t\sum_{m=1}^{M}\norm{f^{m}}^2_{L^2}     \right) e^{3M\Delta t},
				\end{multline*}
				which proves the result.
		\end{proof}

				\begin{remark}
					\label{rem:utildeL2bound}
					We also obtain from~\eqref{eq:utildebound}  that 
					\begin{equation}\label{eq:utildeL2bound}
					\norm{\tu^M_h}_{L^2}^2\leq C\left( \norm{u_0}_{L^2}^2 +  \Delta t\sum_{m=1}^{M}\norm{f^{m}}^2_{L^2}     \right) e^{3M\Delta t},
					\end{equation}
					for any $M\in \N$.
				\end{remark}
				
				\subsection{Solvability of the scheme}
				Next, we show that the scheme~\eqref{eq:step1fully}--\eqref{eq:projectionfullydiscrete} is well-posed, i.e., given $(u^m_h,u^{m-1}_h,\tu_h^m,\tu_h^{m-1},p_h^m)\in \Yh\times \Yh\times\Uh\times\Uh\times\Ph$, there is a unique $(\tu_h^{m+1},u^{m+1}_h,p_h^{m+1})\in \Uh\times\Yh\times\Ph$ solving~\eqref{eq:step1fully}--\eqref{eq:projectionfullydiscrete}.  The proof is mostly standard, an application of the Lax-Milgram theorem, and similar to~\cite[Lemma 3.4]{chorinprojection}. We include it here for completeness.
				\begin{lemma}
					\label{lem:solvability}
					For every $\Delta t, h>0$ and every $m=0,1,\dots$, given $(u^m_h,u^{m-1}_h,\tu_h^m,\tu_h^{m-1},p^m_h)\in \Yh\times\Yh\times\Uh\times\Uh\times\Ph$, then there exists a unique $(\tu_h^{m+1},u^{m+1}_h,p^{m+1}_h)\in \Uh\times\Yh\times\Ph$ solving~\eqref{eq:step1fully}--\eqref{eq:projectionfullydiscrete}.
				\end{lemma}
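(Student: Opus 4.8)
Throughout, the key structural observation is that the scheme decouples: for given $(u^m_h,u^{m-1}_h,\tu_h^m,\tu_h^{m-1},p^m_h)$, the prediction step~\eqref{eq:step1fully} determines $\tu_h^{m+1}\in\Uh$ on its own (the velocity field $\hu^{m+1}_h$ entering the convective term is determined by data from previous time levels, so~\eqref{eq:step1fully} is linear in the unknown $\tu^{m+1}_h$), and the projection step~\eqref{eq:projectionfullydiscrete} then determines $(u^{m+1}_h,p^{m+1}_h)$ from $\tu^{m+1}_h$ and $p^m_h$. Since $\Uh$, $\Ph$ and $\Yh$ are finite dimensional, for each step it suffices to exhibit a continuous coercive bilinear form and a bounded right-hand side functional and invoke the Lax--Milgram theorem; equivalently, one checks that the associated square linear systems have trivial kernels.

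For the prediction step I would work with the bilinear form $a_1(w,v)=\frac{3}{2\Delta t}(w,v)+b(\hu^{m+1}_h,w,v)+\mu(\Grad w,\Grad v)$ on $\Uh\times\Uh$ and the functional $\ell_1(v)=\frac{1}{2\Delta t}(4u^m_h-u^{m-1}_h,v)+(p^m_h,\Div v)+(\PUh f^{m+1},v)$, so that~\eqref{eq:step1fully} reads $a_1(\tu^{m+1}_h,v)=\ell_1(v)$ for all $v\in\Uh$. Continuity of both is immediate on the finite-dimensional space $\Uh$ (here one uses $u^m_h,u^{m-1}_h\in\Yh\subset L^2(\dom)$, $p^m_h\in\Ph\subset L^2(\dom)$ and $f^{m+1}\in L^2(\dom)$, the last following from $f\in L^2(0,T;L^2_{\Div}(\dom))$). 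The essential point is coercivity: since every $w\in\Uh$ vanishes on $\partial\dom$, the skew-symmetry property of $b$ recorded after~\eqref{eq:defb} gives $b(\hu^{m+1}_h,w,w)=0$, so $a_1(w,w)=\frac{3}{2\Delta t}\norm{w}_{L^2(\dom)}^2+\mu\norm{\Grad w}_{L^2(\dom)}^2\geq\mu\norm{\Grad w}_{L^2(\dom)}^2$, which controls $\norm{w}_{H^1_0(\dom)}$ by Poincar\'e's inequality (uniformly in $\Delta t$). Lax--Milgram then yields a unique $\tu^{m+1}_h\in\Uh$.

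For the projection step I would pass to the equivalent scalar Poisson formulation~\eqref{eq:poisson}. Testing~\eqref{eq:projection1} with $v=\Grad q\in\Grad\Ph\subset\Yh$, using~\eqref{eq:projection2} and the identity $(\Div\tu^{m+1}_h,q)=-(\tu^{m+1}_h,\Grad q)$ (valid since $\tu^{m+1}_h\in\Uh\subset H^1_0(\dom)$ and $q\in\Ph\subset H^1(\dom)$), shows that any solution must have $p^{m+1}_h\in\Ph$ solving~\eqref{eq:poisson}. The bilinear form $(q_1,q_2)\mapsto(\Grad q_1,\Grad q_2)$ is continuous and, because $\Ph\subset L^2_0(\dom)$, coercive on $\Ph$ by the Poincar\'e--Wirtinger inequality, while the right-hand side of~\eqref{eq:poisson} is a bounded functional of $q$; Lax--Milgram therefore gives a unique $p^{m+1}_h\in\Ph$. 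Setting $u^{m+1}_h:=\tu^{m+1}_h-\frac{2\Delta t}{3}\Grad(p^{m+1}_h-p^m_h)\in\Uh+\Grad\Ph=\Yh$, equation~\eqref{eq:projection1} holds by construction and~\eqref{eq:projection2} follows from~\eqref{eq:poisson} and the same integration by parts, so a solution of~\eqref{eq:projectionfullydiscrete} exists. For uniqueness, $p^{m+1}_h$ is pinned down as the unique solution of~\eqref{eq:poisson}, and subtracting~\eqref{eq:projection1} for two solutions and testing with $v$ equal to the difference of the two final velocities (which lies in $\Yh$) forces that difference to vanish in $L^2(\dom)$. I would emphasize here that uniqueness is asserted for $u^{m+1}_h$ as an element of $\Yh\subset L^2(\dom)$, since the representation of an element of $\Yh$ as a sum from $\Uh$ and $\Grad\Ph$ need not be unique.

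Finally, the initializing steps --- the zeroth projection~\eqref{eq:zerothstep} and the backward-Euler step~\eqref{eq:step1fully1step}--\eqref{eq:projectionfullydiscretestep1} of Remark~\ref{rem:1ststep} --- have the identical variational structure, with the constant $3/(2\Delta t)$ replaced by $1/\Delta t$, and are handled by the same two arguments; this supplies the base case, and the induction then produces a unique iterate at every level. I expect no genuine difficulty beyond two points: the coercivity of $a_1$, which rests entirely on $b(\hu^{m+1}_h,w,w)=0$ for $w\in H^1_0(\dom)$, and the reduction of the mixed projection step to the scalar Poisson problem~\eqref{eq:poisson}; the remainder is routine bookkeeping, as in~\cite[Lemma 3.4]{chorinprojection}.
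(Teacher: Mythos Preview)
Your proposal is correct and follows essentially the same approach as the paper: both decouple the two steps, apply Lax--Milgram to the prediction step using the skew-symmetry $b(\hu^{m+1}_h,w,w)=0$ for coercivity, and reduce the projection step to the scalar Poisson problem~\eqref{eq:poisson} before recovering $u^{m+1}_h$ from~\eqref{eq:projection1}. The only stylistic difference is that you invoke finite-dimensionality of $\Uh$ for continuity of $a_1$, whereas the paper writes out explicit $H^1_0$-bounds using Sobolev embedding and the a~priori $H^1_0$-control on $\hu^{m+1}_h$; either argument suffices here.
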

				\begin{proof}
					Without loss of generality, we let $m\geq 1$ and start by showing that if $(u_h^m,u^{m-1}_h,\tu_h^m,\tu_h^{m-1},p^m_h)\in\Yh\times\Yh\times\Uh\times\Uh\times\Ph$, then there exists a unique $\tu^{m+1}_h\in \Uh$ satisfying~\eqref{eq:step1fully}, i.e., the first step of the algorithm is well-posed (the case $m=0$ is proved with the same method but slightly different factors in front of the terms). To do so, we write~\eqref{eq:step1fully} as a linear variational problem: Find $\tu\in \Uh $ such that for all $v\in \Uh $,
						\begin{equation}
						\label{eq:elliptic}
						a^m( \tu ,v)= \mathcal{F}^m(v),
					\end{equation}
					where 
					\begin{equation}
						\label{eq:defa}
						a^m(\tu,v) =  	\frac32\left(\tu,v\right)+ \Delta t b(\hu^{m+1}_h,\tu ,v)+\Delta t\mu(\Grad \tu ,\Grad v) 
					\end{equation}
					and
					\begin{equation}
						\label{eq:deffn}
						\mathcal{F}^m(v) = 2(u^m_h,v)-\frac12(u_h^{m-1},v)-\Delta t (\Grad p_h^m, v)+\Delta t(\PUh f^{m+1},v).
					\end{equation}
					We show that $a^m$ is coercive and bounded on $\Uh$ and that $\mathcal{F}^m:\Uh\to \R$ is bounded. Then the Lax-Milgram theorem will imply well-posedness on $\Uh$ with the norm $\norm{v}_{\Uh}:=\norm{v}_{H^1_0}:= \norm{\Grad v}_{L^2}$.	To show the coercivity property, we plug in $v=\tu$ in the bilinear form $a^m$:
							\begin{equation*}
						a^m(\tu,\tu) =  \frac32	\norm{\tu}_{L^2}^2+ \Delta t b(\hu^{m+1}_h,\tu ,\tu)+\Delta t\mu(\Grad \tu ,\Grad \tu)
						= \frac32\norm{\tu}_{L^2}^2+ \Delta t\mu\norm{\Grad \tu}_{L^2}^2
						\geq c\norm{\tu}_{H^1_0}^2,
					\end{equation*}
				where $c>0$,	using the skew-symmetry of $b$. 
					This proves the coercivity on $\Uh$. Regarding the boundedness, we have
					\begin{align*}
						\left| 	a^m(\tu,v)\right| \leq  &  	\frac32\norm{\tu}_{L^2}\norm{v}_{L^2}+ \Delta t \norm{\hu^{m+1}_h}_{L^4}\norm{\tu}_{H^1_0}\norm{v}_{L^4}+\Delta t\mu\norm{\Grad \tu}_{L^2}\norm{\Grad v}_{L^2}\\
						\leq & \frac32\norm{\tu}_{L^2}\norm{v}_{L^2}+ \Delta t\left( \norm{\hu^{m+1}_h}_{L^4}+\mu\right)\norm{\tu}_{H^1_0}\norm{v}_{H^1_0}\leq C \norm{\tu}_{H^1_0}\norm{v}_{H^1_0}, 
					\end{align*}
					with an application of the Sobolev embedding theorem and Poincar\'e's inequality. Here we also used that $\tu^m_h,\tu_h^{m-1}\in H^1_0(\dom)$ by the discrete energy estimate~\eqref{eq:energybalance} and induction, and therefore $\hu_h^{m+1}\in H^1_0(\dom)$.
						Hence $a^m$ is bounded on $\Uh$. 
					Next, we prove that  $\mathcal{F}^m$ is bounded for every $m$:
						\begin{equation*}
				\begin{split}
								|\mathcal{F}^m(v)|&\leq 2\norm{u^m_h}_{L^2}\norm{v}_{L^2}+\frac12\norm{u^{m-1}_h}_{L^2}\norm{v}_{L^2}+\Delta t\norm{\Grad p_h^m}_{L^2}\norm{v}_{L^2}+\Delta t \norm{\PUh f^{m+1}}_{L^2}\norm{v}_{L^2}\\
								&\leq 2\norm{u^m_h}_{L^2}\norm{v}_{L^2}+\frac12\norm{u^{m-1}_h}_{L^2}\norm{v}_{L^2}+\Delta t\norm{\Grad p_h^m}_{L^2}\norm{v}_{L^2}+\Delta t \norm{f^{m+1}}_{L^2}\norm{v}_{L^2}\\
								& \leq C \norm{v}_{L^2}.
				\end{split}
						\end{equation*}
						Here we used that by the energy estimate~\eqref{eq:energybalance}, $u^m_h,u^{m-1}_h\in L^2(\dom)$ and $\Delta t \Grad  p^m_h\in L^2(\dom)$, and by assumption $f\in L^2([0,T]\times\dom)$.
						Thus, with the Poincar\'e inequality, we obtain boundedness of $\mathcal{F}^m$ on $\Uh$.
						We also note that $\Uh$ is dense in $H^1_0(\dom)$ and that these estimates hold for every fixed $\Delta  t, h>0$.
						This implies the solvability of Step 1 of the algorithm.
						For Step 2, we proceed similarly. We use the formulation of~\eqref{eq:projection1}--\eqref{eq:projection2} as a Poisson problem,~\eqref{eq:poisson}. So we seek $p\in \Ph$ such that for any $q\in \Ph$,
						\begin{equation*}
							\widetilde{a}(p,q) = \widetilde{\mathcal{F}}^m(q),
						\end{equation*}
						where $\widetilde{a}:\Ph\times\Ph\to \R$ is given by
						\begin{equation*}
							\widetilde{a}(p,q) = (\Grad p,\Grad q),
						\end{equation*}
						and $\widetilde{\mathcal{F}}^m:\Ph\to \R$ is given by
						\begin{equation*}
							\widetilde{\mathcal{F}}^m(q) = (\Grad p^m_h,\Grad q)-\left(\frac{3\Div \tu^{m+1}_h}{2\Delta t},q\right).
						\end{equation*}
						Clearly, $\widetilde{a}$ is coercive and bounded on $H^1(\dom)\cap L^2_0(\dom)$ and therefore on $\Ph$, and $\widetilde{\mathcal{F}}^m$ is bounded from $\Ph$ to $\R$ since $\tu^{m+1}_h \in H^1_0(\dom)$ and $\Grad p^m_h\in L^2(\dom)$ by the energy estimate and the solvability of the prediction step. Thus, applying the Lax-Milgran theorem again, $p^{m+1}_h$ exists and is unique, and therefore $u^{m+1}_h$ defined through~\eqref{eq:projection1} exists and is unique too. Moreover, the fact that $p^{m+1}_h$ satisfies~\eqref{eq:poisson} for all $q\in \Ph$ implies that $u^{m+1}_h$ satisfies the weak divergence constraint~\eqref{eq:projection2}.
					\end{proof}
					\section{Convergence analysis of the scheme}\label{sec:convergence}
				Using the a priori energy estimate derived in the previous section, we will now show that interpolations of the approximations defined by the scheme~\eqref{eq:step1fully}--\eqref{eq:projectionfullydiscrete} converge up to a subsequence to a weak solution of~\eqref{eq:NS}. For this purpose, we define the following interpolations in time:
					\begin{alignat}{2}
						u_h(t)& = u_h^{m+1},\qquad &t\in (t^m,t^{m+1}],\label{eq:defuh}\\
						\bar{u}_h(t) &=2 u_h^m-u^{m-1}_h,\qquad &t\in (t^m,t^{m+1}],\\
						\widetilde{u}_h(t) &= \widetilde{u}_h^{m+1},\qquad &t\in (t^m,t^{m+1}],\\
							\hu_h(t) &= 2\tu_h^{m}-\tu^{m-1}_h,\qquad &t\in (t^m,t^{m+1}],\\
						p_h(t)& = p_h^{m+1},\qquad & t\in (t^m,t^{m+1}],\\
						f_h(t)& = \frac{1}{\Delta t}\int_{t^{m+1/2}}^{t^{m+3/2}}\PUh f(s)ds,\qquad &t\in (t^m,t^{m+1}],\label{eq:defph}
					\end{alignat}
					for $m=1,2,\dots$ and for $m=0$:
					\begin{align*}
						u_h(t) = \bar{u}_h(t) = u^1_h,\qquad &t\in (t^0,t^{1}],\\
						\tu_h(t) = \hu_h(t) = \tu^1_h,\qquad &t\in (t^0,t^{1}],\\
						p_h(t) = p_h^1,		\qquad &t\in (t^0,t^{1}],\\
							f_h(t) = \frac{1}{\Delta t}\int_{t^{1/2}}^{t^{3/2}}\PUh f(s)ds,\qquad &t\in [t^0,t^{1}],
					\end{align*}
					and
					\begin{equation*}
						\tu_h(s)=\hu_h(s)=\tu^0_h,\quad u_h(s)=\bar{u}_h(s)=u_h^0,\quad \text{and}\quad p_h(s)=p_h^0,\quad \text{for }\, s\leq 0.
					\end{equation*}
					From the discrete energy estimate in the last section, Lemma~\ref{lem:discenergy}  (and the triangle inequality), we obtain the following uniform a priori estimates for the sequences $\{u_h\}_{h>0}$, $\{\bar{u}_h\}_{h>0}$, $\{\tu_h\}_{h>0}$ and $\{\hu_h\}_{h>0}$:
					\begin{align*}
						&\{u_h\}_{h>0},\{\bar{u}_h\}_{h>0}\subset L^\infty(0,T;L^2(\dom)),\\
						&\{\tu_h\}_{h>0}, \{\hu_h\}_{h>0}\subset L^\infty(0,T;L^2(\dom))\cap L^2(0,T;H^1_0(\dom)).
					\end{align*}
					These uniform estimates imply, by the Banach-Alaoglu theorem, that there exist weakly convergent subsequences, which, for the ease of notation, we still denote by $h\to 0$,
					\begin{align}
						u_h\weakstar u,\quad 	\bar{u}_h\weakstar \bar{u},\quad	\widetilde{u}_h\weakstar \widetilde{u},\quad \hu_h\weakstar \hu, &\quad \text{in }\, L^\infty(0,T;L^2(\dom)),\label{eq:uhweakconv}\\
						\hu_h\weak \hu,\quad 	\widetilde{u}_h\weak \widetilde{u},&\quad \text{in }\, L^2(0,T;H^1_0(\dom)).\label{eq:Hhweakconv}
					\end{align}
					In order to prove convergence of the scheme, we will need to derive strong precompactness of these sequences in $L^2([0,T]\times\dom)$. We will do this by using Simon's compactness criterion~\cite{Simon1987}, stated as Theorem~\ref{thm:Simonlemma} in the Appendix, for the sequence $\{\tu_h\}_{h>0}$. The main issue here is that while the first condition in Theorem~\ref{thm:Simonlemma} is readily availabe thanks to the energy estimate (with $B=L^2(\dom)$), the second criterion requires more work and combining estimates for the different approximations. As a first step, we show that the sequences $\{\tu_h\}_{h>0}$, $\{u_h\}_{h>0}$, $\{\bar{u}_h\}_{h>0}$ and $\{\hu_h\}_{h>0}$ have the same limits:
					\begin{lemma}
						\label{lem:samelimits}
						Assume that  $\Delta t = o_{h\to 0}(1)$. Then, we have 
						$u=\bar{u}=\tu=\hu$ a.e. in $[0,T]\times\dom$ and in $L^2([0,T]\times\dom)$, and
						\begin{align}
							\label{eq:uubarconv}
							\lim_{h\to 0}\norm{u_h-\bar{u}_h}_{L^2([0,T]\times\dom)} &= 0,\\
							\label{eq:utildeuconv}
							\lim_{h\to 0}\norm{u_h-\tu_h}_{L^2([0,T]\times\dom)}& = 0,\\
								\label{eq:ubaruhatconv}
							\lim_{h\to 0}\norm{\bar{u}_h-\hu_h}_{L^2([0,T]\times\dom)}& = 0.
						\end{align}	
					\end{lemma}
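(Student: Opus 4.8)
The plan is to establish the claimed equalities of weak limits by bounding the $L^2([0,T]\times\dom)$-distances between the various interpolants using only the quantities controlled by the discrete energy estimate, Lemma~\ref{lem:discenergy}, and then invoking uniqueness of weak limits. Concretely, the discrete energy estimate provides uniform bounds on $\sum_{m=1}^{M-1}\norm{u^{m+1}_h-2u^m_h+u^{m-1}_h}_{L^2}^2$ and on $\sum_{m=1}^{M-1}\norm{\tu^{m+1}_h-u^{m+1}_h}_{L^2}^2$ (both with constants independent of $h$ and $\Delta t$), together with the $L^\infty(0,T;L^2)$ bounds on all the sequences. These are precisely the differences that appear when comparing the interpolants defined in~\eqref{eq:defuh}--\eqref{eq:defph}.

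First I would prove~\eqref{eq:utildeuconv}: on $(t^m,t^{m+1}]$ we have $u_h(t)-\tu_h(t)=u^{m+1}_h-\tu^{m+1}_h$, so $\norm{u_h-\tu_h}_{L^2([0,T]\times\dom)}^2 = \Delta t\sum_{m}\norm{u^{m+1}_h-\tu^{m+1}_h}_{L^2}^2$, and the sum is $O(1)$ by~\eqref{eq:energybalance}, so the whole quantity is $O(\Delta t)=o_{h\to 0}(1)$ (handling the $m=0$ term separately via~\eqref{eq:u0estimate} and the first-step bound $\norm{\tu^1_h-u^0_h}_{L^2}^2 = O(1)$, also from Lemma~\ref{lem:discenergy}). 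Next, for~\eqref{eq:uubarconv}, on $(t^m,t^{m+1}]$ we have $u_h(t)-\bar u_h(t) = u^{m+1}_h - (2u^m_h - u^{m-1}_h) = (u^{m+1}_h - 2u^m_h + u^{m-1}_h)$, so again $\norm{u_h-\bar u_h}_{L^2([0,T]\times\dom)}^2 = \Delta t\sum_m\norm{u^{m+1}_h-2u^m_h+u^{m-1}_h}_{L^2}^2 = O(\Delta t)$, using~\eqref{eq:energybalance} and treating $m=0,1$ by the a priori $L^\infty L^2$ bounds. For~\eqref{eq:ubaruhatconv}, write $\bar u_h - \hu_h = (2u^m_h - u^{m-1}_h) - (2\tu^m_h - \tu^{m-1}_h) = 2(u^m_h - \tu^m_h) - (u^{m-1}_h - \tu^{m-1}_h)$ on $(t^m,t^{m+1}]$, and control it by the same sum $\sum_m\norm{u^{m+1}_h-\tu^{m+1}_h}_{L^2}^2$ after a shift of index and the triangle inequality; equivalently, combine~\eqref{eq:uubarconv} and~\eqref{eq:utildeuconv} with a further comparison $\norm{\tu_h - \hu_h}_{L^2}$, which reduces to the same $u-\tu$ differences plus second-difference terms.

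Once these three $L^2$-convergences are in hand, the equality of weak limits follows immediately: if $u_h \weakstar u$ and $u_h - \tu_h \to 0$ strongly in $L^2([0,T]\times\dom)$ while $\tu_h \weakstar \tu$, then testing against arbitrary $L^2$ functions gives $\int (u-\tu)\varphi = \lim \int(u_h - \tu_h)\varphi = 0$, so $u = \tu$ a.e.; similarly $u = \bar u$ and $\bar u = \hu$, whence all four limits coincide, and the strong $L^2$ statements are exactly~\eqref{eq:uubarconv}--\eqref{eq:ubaruhatconv} plus the triangle inequality. The main obstacle, such as it is, is bookkeeping: carefully matching each interpolant difference to a term that is genuinely controlled by~\eqref{eq:energybalance} (the second differences $u^{m+1}_h-2u^m_h+u^{m-1}_h$ and the predictor-corrector gaps $\tu^{m+1}_h-u^{m+1}_h$ are, but e.g.\ $\tu^{m+1}_h - \tu^m_h$ is \emph{not} directly, so one must route through $u^{m+1}_h$), and dealing cleanly with the initial steps $m=0,1$ where the interpolants are defined by the separate first-order startup~\eqref{eq:step1fully1step}--\eqref{eq:projectionfullydiscretestep1}; the bound $\norm{\tu^1_h-u^0_h}_{L^2}^2=O(1)$ from Lemma~\ref{lem:discenergy} is what makes those boundary terms harmless after multiplication by $\Delta t$.
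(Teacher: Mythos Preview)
Your proposal is correct and follows essentially the same approach as the paper: you compute $\norm{u_h-\tu_h}_{L^2}^2$, $\norm{u_h-\bar u_h}_{L^2}^2$, and $\norm{\bar u_h-\hu_h}_{L^2}^2$ as $\Delta t$ times the sums $\sum_m\norm{u_h^{m+1}-\tu_h^{m+1}}_{L^2}^2$, $\sum_m\norm{u_h^{m+1}-2u_h^m+u_h^{m-1}}_{L^2}^2$, and (via the decomposition $2(u_h^m-\tu_h^m)-(u_h^{m-1}-\tu_h^{m-1})$) again $\sum_m\norm{u_h^m-\tu_h^m}_{L^2}^2$, all of which are $O(1)$ by Lemma~\ref{lem:discenergy}, and then deduce equality of weak limits by testing. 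The only cosmetic discrepancy is that on $(t^0,t^1]$ the paper's definitions give $u_h=\bar u_h$ and $\tu_h=\hu_h$ identically, so no separate treatment of $m=0$ is actually needed for~\eqref{eq:uubarconv}; your remark about handling those terms via the $L^\infty L^2$ bounds is harmless but superfluous.
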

					\begin{proof}
						We have
						\begin{align*}
							\norm{{u}_h-\tu_h}_{L^2([0,T]\times\dom)}^2 & = \Delta t\sum_{m=1}^{N-1}\int_{\dom}\left|u^{m+1}_h-\tu^{m+1}_h\right|^2 dx +\Delta t \int_{\dom}\left|u^{1}_h-\tu^{1}_h\right|^2 dx\\
							&\leq C\Delta t (\norm{u_0}_{L^2}^2+\norm{f}_{L^2([0,T]\times\dom)}^2)+ 2\Delta t\left(\norm{u_h^1}_{L^2}^2+\norm{\tu_h^1}_{L^2}^2\right)\\
							& \leq C\Delta t (\norm{u_0}_{L^2}^2+\norm{f}_{L^2([0,T]\times\dom)}^2)
						\end{align*}
						by the discrete energy estimate, Lemma~\ref{lem:discenergy}, and where we also used that $\norm{\tu_h^1}_{L^2}^2\leq 2\norm{u_h^0}_{L^2}^2 + 2\norm{u_h^0-\tu_h^1}^2_{L^2}$. This shows~\eqref{eq:utildeuconv}. 
						Using this, it then follows easily that for any test function $\varphi\in L^2([0,T]\times\dom)$, since $u_h,\tu_h$ converge weakly in $L^2([0,T]\times\dom)$,
						\begin{multline}\label{eq:samelim3}
							\int_0^T\!\!\int_{\dom}{u}\cdot \varphi\, dx dt = \lim_{h\to 0} \int_0^T\!\!\int_{\dom}{u}_h\cdot \varphi\, dx dt\\
							= \lim_{h\to 0}\int_0^T\!\!\int_{\dom}({u}_h-\tu_h)\cdot \varphi\, dx dt+ \lim_{h\to 0}\int_0^T\!\!\int_{\dom}\tu_h\cdot \varphi \,dx dt =\int_0^T\!\!\int_{\dom}\tu\cdot \varphi\, dx dt,
						\end{multline}
						hence $\tu={u}$ a.e. in $[0,T]\times\dom$ and in $L^2([0,T]\times\dom)$.
						In the same way, we compute
						\begin{align*}
							\norm{\bar{u}_h-u_h}_{L^2([0,T]\times\dom)}^2 & = \Delta t\sum_{m=1}^{N-1}\int_{\dom}\left|u^{m+1}_h-2u^m_h+u^{m-1}_h\right|^2 dx \\
							&\leq C\Delta t (\norm{u_0}_{L^2}^2+\norm{f}_{L^2([0,T]\times\dom)}^2).
						\end{align*}
						Thus as $\Delta t,h \to 0$, we obtain~\eqref{eq:uubarconv}, and in the same way as in~\eqref{eq:samelim3}, one shows that the limits $u$ and $\bar{u}$ agree almost everywhere.  Finally, to prove~\eqref{eq:ubaruhatconv}, we use the triangle inequality and the energy balance~\eqref{eq:energybalance}:
							\begin{align*}
							\norm{\bar{u}_h-\hu_h}_{L^2([0,T]\times\dom)}^2 & = \Delta t\sum_{m=1}^{N-1}\int_{\dom}\left|2u^m_h-u^{m-1}_h-2\tu_h^{m}+\tu_h^{m-1}\right|^2 dx +\Delta t \int_{\dom} |u_h^1 - \tu_h^1|^2 dx \\
							&\leq 9\Delta t\sum_{m=1}^{N-1}\int_{\dom}\left|u^m_h-\tu_h^{m}\right|^2 dx + 2 \Delta t\sum_{m=1}^{N-1}\int_{\dom}\left|u^{m-1}_h-\tu_h^{m-1}\right|^2 dx\\
							&\leq C\Delta t (\norm{u_0}_{L^2}^2+\norm{f}_{L^2([0,T]\times\dom)}^2).
						\end{align*}
						This implies, as in~\eqref{eq:samelim3}, that the limits $\bar{u}$ and $\hu$ agree, and thus all the limits agree.
					\end{proof}
				
				Next, we note that the limit $u$ is weakly divergence free:
				\begin{lemma}\label{lem:divfree2}
					Any weak limit $u$ of the sequence $\{{u}_h\}_{h>0}$ satisfies for a.e. $t\in [0,T]$,
					\begin{equation*}
						(u,\Grad q) = 0,\quad \forall \, q\in H^1(\dom).
					\end{equation*}
					
				\end{lemma}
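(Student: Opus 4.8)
The plan is to use that each final-velocity iterate is discretely divergence-free. Indeed, $u_h^{m+1}$ satisfies $(u_h^{m+1},\Grad \tilde q)=0$ for all $\tilde q\in\Ph$ by \eqref{eq:projection2} (for $m\ge 1$), by \eqref{eq:projection2step1} (for $m=0$), and $u_h^0$ satisfies the same by \eqref{eq:zerothstep}. Hence the time interpolant \eqref{eq:defuh} obeys $(u_h(t),\Grad \tilde q)=0$ for every $\tilde q\in\Ph$ and every $t\in(0,T]$. Fix $q\in H^1(\dom)$ and $\psi\in C_c^\infty(0,T)$. Using $\PPh q\in\Ph$ as the discrete test function and splitting, one has for every $h$
\[
\int_0^T\psi(t)\,(u_h(t),\Grad q)\,dt=\int_0^T\psi(t)\,(u_h(t),\Grad q-\Grad\PPh q)\,dt .
\]

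First I would show the right-hand side tends to $0$ as $h\to 0$. By Cauchy--Schwarz it is bounded by $\norm{\psi}_{L^1(0,T)}\,\sup_{t}\norm{u_h(t)}_{L^2(\dom)}\,|q-\PPh q|_{H^1(\dom)}$, and the first two factors are uniformly bounded by the discrete energy estimate (Lemma~\ref{lem:discenergy}). For the last factor I claim $\Grad\PPh q\to\Grad q$ strongly in $L^2(\dom)$: given $\varepsilon>0$ choose $q_\varepsilon\in H^2(\dom)$ with $\norm{q-q_\varepsilon}_{H^1(\dom)}<\varepsilon$ by density of $H^2$ in $H^1$, and then, using the $H^1$-stability \eqref{eq:H1} and the approximation estimate \eqref{eq:H1approx} assumed for $\PPh$,
\[
|q-\PPh q|_{H^1}\le |q-q_\varepsilon|_{H^1}+|q_\varepsilon-\PPh q_\varepsilon|_{H^1}+|\PPh(q_\varepsilon-q)|_{H^1}\le (1+C)\varepsilon+Ch\,|q_\varepsilon|_{H^2},
\]
so $\limsup_{h\to0}|q-\PPh q|_{H^1}\le(1+C)\varepsilon$, and since $\varepsilon>0$ is arbitrary the claim follows. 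Thus the right-hand side above, and therefore $\int_0^T\psi(t)(u_h(t),\Grad q)\,dt$, tends to $0$.

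On the other hand, since $u_h\weakstar u$ in $L^\infty(0,T;L^2(\dom))$ by \eqref{eq:uhweakconv} and $\psi\,\Grad q\in L^1(0,T;L^2(\dom))$, the left-hand side converges to $\int_0^T\psi(t)(u(t),\Grad q)\,dt$. Comparing the two limits gives $\int_0^T\psi(t)(u(t),\Grad q)\,dt=0$ for every $\psi\in C_c^\infty(0,T)$, hence $(u(t),\Grad q)=0$ for a.e.\ $t\in[0,T]$. To obtain the stated conclusion with a single null set valid for all $q\in H^1(\dom)$, I would run this argument over a countable dense subset of $H^1(\dom)$, take the union of the exceptional sets, and then extend to arbitrary $q\in H^1(\dom)$ by continuity of $q\mapsto(u(t),\Grad q)$ together with $u(t)\in L^2(\dom)$ for a.e.\ $t$.

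There is no serious obstacle here: the argument is essentially ``discrete constraint plus weak-$*$ convergence against a strongly convergent test sequence.'' The only point requiring care is the passage from discrete test functions in $\Ph$ to general $q\in H^1(\dom)$, which relies on the strong convergence $\Grad\PPh q\to\Grad q$ in $L^2(\dom)$ and hence on the assumed $H^1$-stability and approximation properties of the $L^2$-projection $\PPh$.
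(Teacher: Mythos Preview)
Your proof is correct and follows the same approach as the paper: use the discrete divergence constraint $(u_h,\Grad\PPh q)=0$, split $(u_h,\Grad q)=(u_h,\Grad(q-\PPh q))$, and pass to the limit using the uniform $L^\infty(0,T;L^2)$-bound on $u_h$ together with $\PPh q\to q$ in $H^1(\dom)$. Your version is in fact more carefully written than the paper's: you integrate against a time test function $\psi$ to correctly exploit weak-$*$ convergence (the paper writes $(u,\Grad q)=\lim_{h\to 0}(u_h,\Grad q)$ ``for a.e.\ $t$'', which is not immediate from weak-$*$ convergence), you spell out the density argument behind $\PPh q\to q$ in $H^1$ (the paper just cites \eqref{eq:H1}), and you address the single null set for all $q$.
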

				\begin{proof}
					This was proved in~\cite[Lemma 5.6]{chorinprojection}, but we will repeat the proof here for convenience.
					As $u$ is the weak $L^2$-limit of $\{{u}_h\}_{h>0}$ , we have for almost every $t\in [0,T]$ and any $q\in H^1(\dom)$,
					\begin{equation*}
						(u,\Grad q) = \lim_{h\to 0} ({u}_h,\Grad q).
					\end{equation*}
					Therefore we can write
					\begin{equation*}
						|({u}_h ,\Grad q)| = 	|({u}_h ,\Grad (q-\PPh q)) |\leq \norm{{u}_h}_{L^2}\norm{\Grad (q-\PPh q)}_{L^2}\stackrel{h\to 0}{\longrightarrow} 0,
					\end{equation*}
					since ${u}_h\in L^\infty([0,T];L^2(\dom))$ and $\PPh q \to q$ in $H^1(\dom)$ as $h\to 0$ for any $q\in H^1(\dom)$ by~\eqref{eq:H1}. Thus, in the limit $(u,\Grad q) = 0$ for any $q\in H^1(\dom)$  and a.e. $t\in [0,T]$.
				\end{proof}
				Next, we derive a preliminary technical result that will  later be improved to an estimate on the uniform time continuity of $\tu_h$ to obtain the second criterion in Theorem~\ref{thm:Simonlemma}.
				To do so, we adapt a result from~\cite[Lemma 3.3]{Eymard2024} and~\cite[Lemma 5.5]{chorinprojection}:
				\begin{lemma}
					\label{lem:timecontinuityutilde}
					Denote $Z = (H^1_{\Div}(\dom)\cap H^s(\dom))^*$. Assume that $h^s\leq C {\Delta t}$ where $2\leq s\leq k+1$ and $k$ is the polynomial degree of $\Uh$. Then the aproximations $\tu_h$ computed by~\eqref{eq:step1fully}--\eqref{eq:projectionfullydiscrete} satisfy for any $0\leq \tau <T$,  
				\begin{equation}
						\label{eq:timecontutildeweak}
					\int_{\Delta t}^{T-\tau} \norm{3(\tu_h(t+\tau)-\tu_h(t))-(\tu_h(t+\tau-\Delta t)-\tu_h(t-\Delta t))}_{Z}^2 dt \leq C \tau (\tau +\Delta t).
					\end{equation}
				\end{lemma}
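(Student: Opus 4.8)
The plan is to exploit the BDF2 structure via a telescoping identity: the particular combination in~\eqref{eq:timecontutildeweak} is tailored so that, at the discrete level, it equals $2\Delta t$ times a sum of the scheme's momentum residuals over a window of roughly $\tau/\Delta t$ time steps. Set $D_h(t,\tau):=3(\tu_h(t+\tau)-\tu_h(t))-(\tu_h(t+\tau-\Delta t)-\tu_h(t-\Delta t))$, and for $t\in(\Delta t,T-\tau)$ let $m\ge1$, $n\ge m$ be determined by $t\in(t^m,t^{m+1}]$, $t+\tau\in(t^n,t^{n+1}]$ (so $n\le N-1$); by the interpolation definitions $D_h(t,\tau)=3(\tu^{n+1}_h-\tu^{m+1}_h)-(\tu^n_h-\tu^m_h)$. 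Summing the elementary identity $3a^{k+1}-4a^k+a^{k-1}=3(a^{k+1}-a^k)-(a^k-a^{k-1})$ over $k=m+1,\dots,n$ telescopes this to $\sum_{k=m+1}^{n}(3\tu^{k+1}_h-4\tu^k_h+\tu^{k-1}_h)$. Writing $e^j_h:=u^j_h-\tu^j_h$ and substituting $\tu^k_h=u^k_h-e^k_h$, $\tu^{k-1}_h=u^{k-1}_h-e^{k-1}_h$ gives
\[
D_h(t,\tau)=\sum_{k=m+1}^{n}\bigl(3\tu^{k+1}_h-4u^k_h+u^{k-1}_h\bigr)+\sum_{k=m+1}^{n}\bigl(4e^k_h-e^{k-1}_h\bigr).
\]
The key point is that, by the projection step~\eqref{eq:projection1}--\eqref{eq:projection2} (cf.\ Remark~\ref{rem:PVhonUh}), each $e^j_h$ is a discrete gradient, $e^j_h\in\Grad\Ph$, so $(e^j_h,v)=0$ for every $v\in H^1_{\Div}(\dom)$ by Green's formula; hence the second sum is annihilated by any test function in $H^1_{\Div}(\dom)\cap H^s(\dom)$, which is exactly the space dual to $Z$.

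I would then fix $v\in H^1_{\Div}(\dom)\cap H^s(\dom)$ and split $v=\PUh v+(v-\PUh v)$. On $\PUh v\in\Uh$ the prediction step~\eqref{eq:step1fully} turns $(3\tu^{k+1}_h-4u^k_h+u^{k-1}_h,\PUh v)$ into $2\Delta t\langle R^{k+1},\PUh v\rangle$, where
\[
\langle R^{k+1},\varphi\rangle:=-b(\hu^{k+1}_h,\tu^{k+1}_h,\varphi)+(p^k_h,\Div\varphi)-\mu(\Grad\tu^{k+1}_h,\Grad\varphi)+(\PUh f^{k+1},\varphi);
\]
on $v-\PUh v$, which is $L^2$-orthogonal to $\Uh\ni\tu^{k+1}_h$, only the $e^j_h$-pieces survive, contributing $4(e^k_h,v-\PUh v)-(e^{k-1}_h,v-\PUh v)$. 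Each residual term is bounded using H\"older's inequality together with the embedding $H^s(\dom)\hookrightarrow L^\infty(\dom)$ (valid as $s\ge2>d/2$) for the trilinear form $b$, the $H^1$- and $L^\infty$-stability~\eqref{eq:H1},~\eqref{eq:Lr} of $\PUh$ for the viscous and forcing terms, and, for the pressure, the integration by parts $(p^k_h,\Div\PUh v)=(p^k_h,\Div(\PUh v-v))=-(\Grad p^k_h,\PUh v-v)$ combined with $\|\PUh v-v\|_{L^2}\le Ch^s\|v\|_{H^s}\le C\Delta t\|v\|_{H^s}$ from~\eqref{eq:L2approx} --- it is here that the hypothesis $h^s\le C\Delta t$ enters. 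Using the a priori bounds $\|\tu^{k+1}_h\|_{L^2},\|\hu^{k+1}_h\|_{L^2}\le C$ and $\Delta t\|\Grad p^k_h\|_{L^2}\le C$ of Lemma~\ref{lem:discenergy} and Remark~\ref{rem:utildeL2bound}, one gets $|\langle R^{k+1},\PUh v\rangle|\le Cg_k\|v\|_{H^s}$ and $|(e^j_h,v-\PUh v)|\le C\Delta t\|e^j_h\|_{L^2}\|v\|_{H^s}$, with $g_k:=\|\Grad\tu^{k+1}_h\|_{L^2}+\|\Grad\hu^{k+1}_h\|_{L^2}+\|f^{k+1}\|_{L^2}+\Delta t\|\Grad p^k_h\|_{L^2}$. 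Since $\|v\|_{H^1_{\Div}(\dom)\cap H^s(\dom)}\ge\|v\|_{H^s(\dom)}$, taking the supremum over $v$ yields the pointwise bound
\[
\|D_h(t,\tau)\|_{Z}\le C\Delta t\left(\sum_{k=m+1}^{n}g_k+\sum_{j=m}^{n}\|e^j_h\|_{L^2}\right).
\]

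Finally I would square, integrate in $t$, and use the energy estimate. Each sum has at most $\tfrac{\tau}{\Delta t}+2$ terms, so Cauchy--Schwarz gives $\|D_h(t,\tau)\|_Z^2\le C\Delta t(\tau+\Delta t)\bigl(\sum_k g_k^2+\sum_j\|e^j_h\|_{L^2}^2\bigr)$ (sums over the windows). Interchanging sum and integral: for a fixed index $\ell$, the set of $t\in(0,T)$ for which $\ell$ lies in the relevant window is an interval of length $\le\tau+\Delta t$, so $\int_0^T\sum_{k}g_k^2\,dt\le(\tau+\Delta t)\sum_k g_k^2$ and likewise for the $e$-sum; then Lemma~\ref{lem:discenergy} (with Jensen's inequality for the discretized forcing) gives $\Delta t\sum_k g_k^2\le C$ and $\sum_j\|e^j_h\|_{L^2}^2\le C$, whence $\int_{\Delta t}^{T-\tau}\|D_h(t,\tau)\|_Z^2\,dt\le C(\tau+\Delta t)^2$. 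For $\tau\ge\Delta t$ this is $\le 2C\tau(\tau+\Delta t)$, the claimed bound. For $\tau<\Delta t$ the estimate $(\tau+\Delta t)^2$ is too crude, but there $D_h(t,\tau)$ vanishes unless $t+\tau>t^{m+1}$, i.e.\ unless $t$ lies in one of the intervals $(t^{j+1}-\tau,t^{j+1}]$ (total measure $\le\tau T/\Delta t$), on which $n=m+1$ and $D_h(t,\tau)$ reduces to the single summand $k=m+1$; the same pointwise estimate then gives $\|D_h(t,\tau)\|_Z^2\le C\Delta t^2\bigl(g_{m+1}^2+\|e^m_h\|_{L^2}^2+\|e^{m+1}_h\|_{L^2}^2\bigr)$ there, and integrating over these intervals and summing yields $\le C\Delta t^2\tau\sum_m\bigl(g_{m+1}^2+\|e^m_h\|_{L^2}^2+\|e^{m+1}_h\|_{L^2}^2\bigr)\le C\Delta t\,\tau\le C\tau(\tau+\Delta t)$.

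The hard part is not any single estimate but the interplay of two structural facts. First, one must see that the awkward combination in~\eqref{eq:timecontutildeweak} telescopes into a sum of the scheme's residuals and, crucially, that the defect $e^j_h=u^j_h-\tu^j_h$ between the two velocity fields is a discrete gradient and hence invisible to divergence-free test functions; without this, the characteristic difficulty of projection methods --- that only $u_h$, not $\tu_h$, is approximately divergence free --- would obstruct the use of the momentum equation. (In the first-order case this reduces to a single-step estimate, whereas the BDF2 combination necessarily spans a window of $\sim\tau/\Delta t$ steps.) Second, one must balance that window length against the resolution constraint $h^s\le C\Delta t$: the pressure gradient is only $O(1/\Delta t)$, so the pressure term survives only because $h^s/\Delta t$ stays bounded, and obtaining the sharp rate $\tau(\tau+\Delta t)$ --- in particular handling the degenerate regime $\tau<\Delta t$ correctly --- requires the Cauchy--Schwarz/change-of-order bookkeeping above rather than a pointwise bound.
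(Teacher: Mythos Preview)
Your proof is correct and takes essentially the same approach as the paper's: the same telescoping identity reducing $D_h(t,\tau)$ to a sum of BDF2 residuals, the same use of the scheme on $\PUh v$, and the same role of the hypothesis $h^s\le C\Delta t$ in controlling the pressure/projection-defect contributions through $\|v-\PUh v\|_{L^2}\le Ch^s\|v\|_{H^s}$. The only cosmetic differences are that the paper substitutes $e^j_h=-\tfrac{2\Delta t}{3}\Grad(p^j_h-p^{j-1}_h)$ from the projection step and merges these with the scheme's pressure term (yielding the combined coefficient $\tfrac{14}{3}\Grad p^m_h-\tfrac{10}{3}\Grad p^{m-1}_h+\tfrac{2}{3}\Grad p^{m-2}_h$) rather than bounding the $e^j_h$ directly via $\sum_j\|e^j_h\|_{L^2}^2\le C$ as you do, and the paper compresses the regime $\tau<\Delta t$ into the phrase ``otherwise there is nothing to prove,'' which your final paragraph makes explicit.
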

				\begin{proof}
					Let $\tau>0$ and $t\in (\Delta t,T-\tau]$. Assume that $\tau>\Delta t$, since otherwise there is nothing to prove. 
					We let $m_1\in \N$ such that $\tu_h(t) = \tu_h^{m_1+1}$ and $m_2\in \N$ such that $\tu_h(t+\tau) = \tu_h^{m_2+1}$.  Then
					\begin{align*}
						3(\tu_h(t+\tau)-\tu_h(t))-(\tu_h(t+\tau-\Delta t)-\tu_h(t-\Delta t)) &= 3 \tu_h^{m_2+1}-\tu_h^{m_2}-3 \tu_h^{m_1+1}+ \tu_h^{m_1} \\
						&= \sum_{m=m_1+1}^{m_2}(3 \tu^{m+1}_h-4\tu_h^m+ \tu_h^{m-1}).
					\end{align*}
					Thus, we can take a test function $v\in H^1_{\Div}\cap H^s(\dom)$ and plug in the scheme~\eqref{eq:step1fully}--\eqref{eq:projectionfullydiscrete},
					\begin{align*}
						&(3(\tu_h(t+\tau)-\tu_h(t))-(\tu_h(t+\tau-\Delta t)-\tu_h(t-\Delta t)) ,v) \\
						&\qquad  =  \sum_{m=m_1+1}^{m_2}(3 \tu^{m+1}_h-4\tu_h^m+ \tu_h^{m-1},v)\\
						&\qquad  =  \sum_{m=m_1+1}^{m_2}(3 \tu^{m+1}_h-4\tu_h^m+ \tu_h^{m-1},\PUh v)\\
						&\qquad  = -2\Delta t   \sum_{m=m_1+1}^{m_2}\underbrace{b(\hu^{m+1},\tu^{m+1},\PUh v)}_{\text{I}} - 2\Delta t\sum_{m=m_1+1}^{m_2}\underbrace{\mu(\Grad \tu^{m+1},\Grad \PUh v)}_{\text{II}}  \\
						&\qquad  \quad -\Delta t\sum_{m=m_1+1}^{m_2} \underbrace{\left(\frac{14}{3}\Grad p^{m}_h-\frac{10}{3}\Grad p^{m-1}_h+\frac23\Grad p^{m-2},\PUh v\right)}_{\text{III}}+2\Delta t \sum_{m=m_1+1}^{m_2}\underbrace{(f^{m+1},\PUh v)}_{\text{IV}}
					\end{align*}
						We estimate each of the terms I -- IV:
					\begin{align*}
						|\text{I}|&=\left|b(\hu_h^{m+1},\tu_h^{m+1},\PUh v)\right|\\
						& \leq \left(\norm{\hu^{m+1}_h}_{L^2} \norm{\Grad\tu_h^{m+1}}_{L^2}+\frac12\norm{\Div \hu_h^{m+1}}_{L^2}\norm{\tu_h^{m+1}}_{L^2} \right)\norm{\PUh v}_{L^\infty}\\
						& \leq C\left(\norm{\hu^{m+1}_h}_{L^2} \norm{\Grad\tu_h^{m+1}}_{L^2} +\norm{\Grad\hu_h^{m+1}}_{L^2}\norm{\tu_h^{m+1}}_{L^2}\right)\norm{ v}_{L^\infty}\\
						& \leq C\left(\norm{\hu^{m+1}_h}_{L^2} \norm{\Grad\tu_h^{m+1}}_{L^2} +\norm{\Grad\hu_h^{m+1}}_{L^2}\norm{\tu_h^{m+1}}_{L^2}\right)\norm{ v}_{H^2},
					\end{align*}
					using~\eqref{eq:Lr}.
					For the second term, we have
					\begin{align*}
						|\text{II}|
						& \leq\mu \norm{\Grad \tu^{m+1}_h}_{L^2(\dom)}\norm{\Grad\PUh v}_{L^2(\dom)}\\
						& \leq C \norm{\Grad \tu^{m+1}_h}_{L^2(\dom)}\norm{ v}_{H^1(\dom)},
					\end{align*}
					using~\eqref{eq:H1}.
					For the third term, III, we have, using that $v$ is divergence free,  the properties of the $L^2$-projection,~\eqref{eq:L2projproperties}, and that by the energy estimate $\norm{\Grad p}_{L^\infty(0,T;L^2(\dom))}\leq C \Delta t^{-1}$,
					\begin{align}\label{eq:pressuretimecont}
						|\text{III}|&\leq C\left(\frac{14}{3}\norm{\Grad p^m_h}_{L^2(\dom)}+\frac{10}{3}\norm{\Grad p^{m-1}_h}_{L^2(\dom)}+\frac23\norm{\Grad p^{m-2}}_{L^2(\dom)}\right)\norm{\PUh v-v}_{L^2( \dom)}\\
						& \leq C\left(\frac{14}{3}\norm{\Grad p^m_h}_{L^2(\dom)}+\frac{10}{3}\norm{\Grad p^{m-1}_h}_{L^2(\dom)}+\frac23\norm{\Grad p^{m-2}}_{L^2(\dom)}\right)h^s\norm{ v}_{H^s( \dom)}\notag\\
						& \leq C \frac{h^s}{\Delta t}\norm{ v}_{H^s( \dom)}\notag\\
						& \leq  C \norm{ v}_{H^2( \dom)},\notag
					\end{align}
					under the condition that $h^s\leq C {\Delta t}$, and where we used~\eqref{eq:L2approx}.
					Finally, for the last term,  IV, we have
					\begin{equation*}
						|\text{IV}|  \leq \norm{f^{m+1}}_{ L^2(\dom)}\norm{v}_{ L^2(\dom)} .
					\end{equation*}
					Combining the estimates I -- IV, we have, after taking the supremum over $v\in H^1_{\Div}(\dom)\cap H^s(\dom)$,
					\begin{align*}
						&\norm{3(\tu_h(t+\tau)-\tu_h(t))-(\tu_h(t+\tau-\Delta t)-\tu_h(t-\Delta t)}_Z\\
						&\quad  \leq C \Delta t  \sum_{m=m_1+1}^{m_2} \Big( \norm{\hu^{m+1}_h}_{L^2} \norm{\Grad\tu_h^{m+1}}_{L^2}+ \norm{\Grad\hu^{m+1}_h}_{L^2} \norm{\tu_h^{m+1}}_{L^2}+ \norm{\Grad \tu^{m+1}_h}_{L^2(\dom)}\\
						&\qquad \hphantom{\leq C \Delta t  \sum_{m=m_1+1}^{m_2} \Big(}+ 1+ \norm{f^{m+1}}_{ L^2(\dom)} \Big).
					\end{align*}
					We square this identity and use that $\Delta t (m_2-m_1) \leq \tau +\Delta t$,
					\begin{align*}
						&\norm{3(\tu_h(t+\tau)-\tu_h(t))-(\tu_h(t+\tau-\Delta t)-\tu_h(t-\Delta t)}_Z^2 \\
						&\quad \leq C (\tau+\Delta t)\Delta t  \sum_{m=m_1+1}^{m_2} \Big( \norm{\hu^{m+1}_h}_{L^2}^2 \norm{\Grad\tu_h^{m+1}}^2_{L^2}+\norm{\Grad \hu_h^{m+1}}_{L^2}\norm{\tu_h^{m+1}}_{L^2}\\
						&\qquad \hphantom{ \leq C (\tau+\Delta t)\Delta t  \sum_{m=m_1+1}^{m_2} \Big( }+ \norm{\Grad \tu^{m+1}_h}^2_{L^2(\dom)}+ 1+ \norm{f^{m+1}}^2_{ L^2(\dom)} \Big).
					\end{align*}
				Then we integrate this identity over time to obtain, using the energy estimate, Lemma~\ref{lem:discenergy},  
					\begin{equation*}
						\begin{split}
							&\int_{\Delta t}^{T-\tau}\norm{3(\tu_h(t+\tau)-\tu_h(t))-(\tu_h(t+\tau-\Delta t)-\tu_h(t-\Delta t)}_Z^2  dt  \\
							& \leq C (\tau+\Delta t)\int_{\Delta t}^{T-\tau} \int_t^{t+\tau} \Big( \norm{\hu _h}_{L^2}^2 \norm{\Grad\tu_h}^2_{L^2}+\norm{\tu _h}_{L^2}^2 \norm{\Grad\hu_h}^2_{L^2}\\
							&\quad \hphantom{\leq C (\tau+\Delta t)\int_0^{T-\tau} \int_t^{t+\tau} \Big( } + \norm{\Grad \tu_h}^2_{L^2(\dom)}+ 1+ \norm{f_h}^2_{ L^2(\dom)} \Big) ds dt\\
							& \leq C (\tau+\Delta t)\tau \int_{\Delta t}^{T-\tau}  \Big( \norm{\hu _h}_{L^2}^2 \norm{\Grad\tu_h}^2_{L^2}+\norm{\tu _h}_{L^2}^2 \norm{\Grad\hu_h}^2_{L^2}\\
							&\quad \hphantom{\leq C (\tau+\Delta t)\int_0^{T-\tau}  \Big( } + \norm{\Grad \tu_h}^2_{L^2(\dom)}+ 1+ \norm{f_h}^2_{ L^2(\dom)} \Big) ds\\
							& \leq C (\tau+\Delta t)\tau, 
						\end{split}
					\end{equation*}
					which is what we wanted to prove.
				\end{proof}
				Next, we recall the following Lions-like lemma that was proved in~\cite[Lemma 4.5]{chorinprojection}:
				\begin{lemma}\label{lem:lionslike}
					Denote $Z = (H^k(\dom)\cap H^1_{\Div}(\dom))^*$ for some $k\in \N$. Then for any $\eta>0$ there exists a constant $C_\eta>0$ and $h_\eta>0 $ such that for all $h\leq h_{\eta}$ we have
					\begin{equation}\label{eq:lionslike}
						\norm{\PVh w}_{L^2(\dom)}\leq \eta \norm{w}_{H^1_0(\dom)} + C_\eta \norm{w}_Z\quad \forall w\in H^1_0(\dom).
					\end{equation}
				\end{lemma}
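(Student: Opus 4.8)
The plan is to argue by contradiction, combining the compact embedding $H^1_0(\dom)\hookrightarrow\hookrightarrow L^2(\dom)$ with two structural facts about $\PVh$: it annihilates $\Grad\Ph$, and $\Grad\Ph$ approximates arbitrary $L^2$ gradient fields as $h\to 0$. Suppose \eqref{eq:lionslike} fails for some $\eta_0>0$. Then for each $n\in\N$ there exist a mesh size $h_n\le 1/n$ and $w_n\in H^1_0(\dom)$ with $\norm{\PVh w_n}_{L^2}>\eta_0\norm{w_n}_{H^1_0}+n\norm{w_n}_Z$, where $\PVh$ is understood with mesh size $h_n$. In particular $\PVh w_n\neq 0$, so after dividing by $\norm{\PVh w_n}_{L^2}$ we may assume $\norm{\PVh w_n}_{L^2}=1$, which forces $\norm{w_n}_{H^1_0}<1/\eta_0$ and $\norm{w_n}_Z<1/n$.

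First I would extract a limit. Since $\{w_n\}$ is bounded in $H^1_0(\dom)$, Rellich's theorem yields a subsequence (not relabeled) with $w_n\to w$ strongly in $L^2(\dom)$ and weakly in $H^1_0(\dom)$ for some $w\in H^1_0(\dom)$. As $L^2(\dom)$ embeds continuously in $Z$, also $w_n\to w$ in $Z$, and together with $\norm{w_n}_Z\to 0$ this gives $\norm{w}_Z=0$, i.e.\ $(w,\phi)=0$ for all $\phi\in H^1_{\Div}(\dom)\cap H^k(\dom)$. Since $C^\infty_{c,\Div}(\dom)$ is contained in that space and is $L^2$-dense in $L^2_{\Div}(\dom)$, we get $w\perp L^2_{\Div}(\dom)$, hence $\Pl w=0$ and $w=\Grad p$ for some $p\in H^1(\dom)$ by the Helmholtz--Hodge decomposition; moreover $\Grad p=w\in H^1_0(\dom)$ forces $p\in H^2(\dom)$.

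Next I would show $\PVh w_n\to 0$ in $L^2(\dom)$, which contradicts $\norm{\PVh w_n}_{L^2}=1$. Split $\PVh w_n=\PVh(w_n-w)+\PVh w$. Since $\PVh$ is an $L^2$-contraction, the first term is bounded by $\norm{w_n-w}_{L^2}\to 0$. For the second term, $\Vh=(\Grad\Ph)^\perp$ with $\Grad\Ph$ finite-dimensional, so $\PVh=\operatorname{Id}-P_{\Grad\Ph}$ and $\norm{\PVh w}_{L^2}=\operatorname{dist}_{L^2}(\Grad p,\Grad\Ph)\le\norm{\Grad p-\Grad\PPh p}_{L^2}=|p-\PPh p|_{H^1}\le C h_n\,|p|_{H^2}$ by the approximation property \eqref{eq:H1approx} of $\PPh$, which tends to $0$. (If one prefers not to invoke $p\in H^2$, the same conclusion follows for $p\in H^1$ from \eqref{eq:H1}, \eqref{eq:H1approx} and the density of $H^2(\dom)$ in $H^1(\dom)$ via a three-$\varepsilon$ argument.)

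The main obstacle is precisely the treatment of the term $\PVh w$: one has to recognize that on a gradient field $\PVh$ reduces to the $L^2$-best-approximation error by $\Grad\Ph$, and that this error vanishes only in the limit $h\to 0$ --- which is exactly why the statement allows $C_\eta$ and $h_\eta$ to depend on $\eta$ rather than asserting a uniform bound at a single fixed mesh. A secondary point worth stressing is that the embedding $L^2(\dom)\hookrightarrow Z$ is not injective --- its kernel contains the gradient fields --- so no Lions-type inequality can hold with $w$ in place of $\PVh w$; it is the presence of $\PVh$, which asymptotically kills exactly those degenerate directions, that makes \eqref{eq:lionslike} correct.
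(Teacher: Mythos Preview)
Your argument is correct. The paper does not actually prove this lemma here; it merely cites \cite[Lemma~4.5]{chorinprojection}, so there is no in-text proof to compare against line by line. That said, your contradiction-plus-compactness argument is the standard route for Lions-type inequalities, and the adaptation you make to handle the $h$-dependent projection $\PVh$ is exactly the right one: you correctly identify that the obstruction to a fixed-$h$ estimate is the gradient part of the Helmholtz decomposition, and that $\PVh(\nabla p)=\nabla p-P_{\Grad\Ph}(\nabla p)$ vanishes only in the limit $h\to 0$ via the approximation property~\eqref{eq:H1approx} of $\PPh$. Your negation of the quantifiers (choosing $h_n\le 1/n$ along with $C=n$) is the correct way to capture the joint dependence of $C_\eta$ and $h_\eta$ on $\eta$, and your closing remarks about why $\PVh$ is essential on the left-hand side are on point. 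The parenthetical three-$\varepsilon$ alternative avoiding $p\in H^2$ is a nice touch and makes the argument robust on general Lipschitz domains.
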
	
				Now, we will combine this result with Lemma~\ref{lem:timecontinuityutilde} to show uniform in $h$ time continuity of $\tu_h$ in $L^2([0,T]\times\dom)$. The proof of the following lemma uses ideas from~\cite[Lemma 3.6]{Eymard2024} and~\cite[Lemma 5.6]{chorinprojection}, however, requires some additional work because we only have the estimate~\eqref{eq:timecontutildeweak} on averages of $\tu_h$ at different times. 
				\begin{lemma}\label{lem:timecont}
					The approximations $\tu_h$ computed by the scheme~\eqref{eq:step1fully}--\eqref{eq:projectionfullydiscrete} satisfy
					\begin{equation}
						\label{eq:timecont2}
						\int_0^{T-\tau}\norm{\tu_h(t+\tau)-\tu_h(t)}_{L^2(\dom)}^2 dt \longrightarrow 0,\quad \text{as }\, \tau\to 0,
					\end{equation}
					uniformly with respect to $h,\Delta t>0$.
				\end{lemma}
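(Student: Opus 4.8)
The plan is to obtain the uniform-in-$h$ time-continuity estimate \eqref{eq:timecont2} by interpolating between the weak estimate \eqref{eq:timecontutildeweak} of Lemma~\ref{lem:timecontinuityutilde} and the uniform $L^2(0,T;H^1_0(\dom))$-bound from the discrete energy estimate, using the Lions-like inequality of Lemma~\ref{lem:lionslike}. The first technical hurdle is that \eqref{eq:timecontutildeweak} controls not $\tu_h(t+\tau)-\tu_h(t)$ itself but the second-difference combination $g_h(t):=3(\tu_h(t+\tau)-\tu_h(t))-(\tu_h(t+\tau-\Delta t)-\tu_h(t-\Delta t))$ in the weak norm $Z$. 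So I would first recover control of $w_h(t):=\tu_h(t+\tau)-\tu_h(t)$ from $g_h$. Writing $D_h(t):=\tu_h(t)-\tu_h(t-\Delta t)$ we have $g_h(t)=3w_h(t)-(w_h(t-\Delta t))$ after noticing $w_h(t)-w_h(t-\Delta t)=D_h(t+\tau)-D_h(t)$; more directly $g_h(t)=3w_h(t)-\bigl(D_h(t+\tau)-D_h(t)\bigr)$, and since $\|D_h(t+\tau)-D_h(t)\|_Z$ is itself exactly of the form handled by Lemma~\ref{lem:timecontinuityutilde} with $\tau$ and $\Delta t$ interchanged (it is a first difference of the $\Delta t$-difference), one gets $\int_{\Delta t}^{T-\tau}\|w_h(t)\|_Z^2\,dt\le C\tau(\tau+\Delta t)$ after absorbing; alternatively, telescoping $3^k$ over a geometric-type recursion in steps of $\Delta t$ and using $\|\tu_h^{m+1}-\tu_h^m\|_Z\le C\Delta t$ (from a one-step version of the scheme, which also follows from the estimates I--IV in the proof of Lemma~\ref{lem:timecontinuityutilde}) closes the small leftover boundary layer $t\in(0,\Delta t]$. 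The cleanest route: show $\int_0^{T-\tau}\|\tu_h(t+\tau)-\tu_h(t)\|_Z^2\,dt\le C\tau(\tau+\Delta t)+C\Delta t\le C\tau$ once $\Delta t\lesssim\tau$, and separately handle the regime $\Delta t>\tau$ trivially since then the integral is $\le C\Delta t\le C\tau$ by the crude bound $\|\tu_h(t+\tau)-\tu_h(t)\|_Z\le C$ plus the fact that for $\tau<\Delta t$ the difference $\tu_h(t+\tau)-\tu_h(t)$ is nonzero only on a set of measure $O(\tau N)=O(T)$... actually it is nonzero on the union of intervals straddling grid points, of total length $O(\tau/\Delta t\cdot\Delta t)=O(\tau)$, giving directly $\int_0^{T-\tau}\|\tu_h(t+\tau)-\tu_h(t)\|_{L^2}^2\,dt\le C\tau$ in that regime.

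With the $Z$-norm time-continuity in hand, I pass to the $L^2(\dom)$-norm via Lemma~\ref{lem:lionslike}. The obstacle is that Lemma~\ref{lem:lionslike} estimates $\|\PVh w\|_{L^2}$, not $\|w\|_{L^2}$, whereas $\tu_h(t+\tau)-\tu_h(t)\in\Uh\subset H^1_0(\dom)$ need not lie in $\Vh$. I would use Remark~\ref{rem:PVhonUh}: $\PVh\tu_h^{m+1}=u_h^{m+1}$, so $\PVh(\tu_h(t+\tau)-\tu_h(t))=u_h(t+\tau)-u_h(t)$ in the relevant sense. Thus
\[
\|\tu_h(t+\tau)-\tu_h(t)\|_{L^2}\le \|u_h(t+\tau)-u_h(t)\|_{L^2}+\|(I-\PVh)\tu_h(t+\tau)\|_{L^2}+\|(I-\PVh)\tu_h(t)\|_{L^2},
\]
and each $\|(I-\PVh)\tu_h\|_{L^2}=\|\tu_h-u_h\|_{L^2}$ which is $O(\sqrt{\Delta t})$ in $L^2([0,T]\times\dom)$ by Lemma~\ref{lem:samelimits} (equation \eqref{eq:utildeuconv}). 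It then remains to control $\int_0^{T-\tau}\|u_h(t+\tau)-u_h(t)\|_{L^2}^2\,dt$. For that I apply Lemma~\ref{lem:lionslike} to $w=\tu_h(t+\tau)-\tu_h(t)\in H^1_0(\dom)$: for any $\eta>0$ and $h\le h_\eta$,
\[
\|u_h(t+\tau)-u_h(t)\|_{L^2}=\|\PVh w\|_{L^2}\le \eta\|w\|_{H^1_0}+C_\eta\|w\|_Z.
\]
Squaring, integrating in $t$, and using $\int_0^{T-\tau}\|w(t)\|_{H^1_0}^2\,dt\le 2\int_0^T\|\Grad\tu_h(s)\|_{L^2}^2\,ds\le C$ (discrete energy estimate, uniformly in $h$) together with the $Z$-norm continuity $\int_0^{T-\tau}\|w(t)\|_Z^2\,dt\le C\tau$, gives
\[
\int_0^{T-\tau}\|u_h(t+\tau)-u_h(t)\|_{L^2}^2\,dt\le C\eta^2+C_\eta^2 C\tau.
\]

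Finally I combine everything: $\int_0^{T-\tau}\|\tu_h(t+\tau)-\tu_h(t)\|_{L^2}^2\,dt\le C\eta^2+C_\eta\tau+C\Delta t$. Given $\varepsilon>0$, first fix $\eta$ so that $C\eta^2<\varepsilon/3$; this fixes $C_\eta$ and $h_\eta$. Then choose $\tau_0$ and a threshold $\Delta t_0$ so that $C_\eta\tau<\varepsilon/3$ and $C\Delta t<\varepsilon/3$ for $\tau<\tau_0$ (recalling the standing assumptions $\Delta t=o_{h\to0}(1)$ and $h^s\le C\Delta t$ tie $\Delta t$ to $h$; for the finitely many coarse $h>h_\eta$ or $\Delta t\ge\Delta t_0$ the crude bound and the straddling-interval argument of the previous paragraph handle those cases directly, since for such $h$ there are only finitely many and each gives a continuous-in-$\tau$ quantity vanishing as $\tau\to0$). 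This yields \eqref{eq:timecont2} uniformly in $h$ and $\Delta t$. The main obstacle, as flagged, is the passage from the second-difference $Z$-estimate \eqref{eq:timecontutildeweak} to a clean first-difference $Z$-estimate for $\tu_h(t+\tau)-\tu_h(t)$ while correctly accounting for the boundary layer $t\in(0,\Delta t]$ and the case $\tau<\Delta t$; everything after that is a standard interpolation-plus-Lions argument as in \cite[Lemma 3.6]{Eymard2024} and \cite[Lemma 5.6]{chorinprojection}.
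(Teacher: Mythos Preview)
Your overall strategy is correct and closely parallels the paper's: split $\tu_h(t+\tau)-\tu_h(t)$ into the projected part $u_h(t+\tau)-u_h(t)=\PVh(\tu_h(t+\tau)-\tu_h(t))$ plus the residual $\tu_h-u_h$ (controlled by Lemma~\ref{lem:samelimits}), and estimate the projected part via Lemma~\ref{lem:lionslike} combined with the weak estimate of Lemma~\ref{lem:timecontinuityutilde} and the $L^2(0,T;H^1_0)$ energy bound. The difference lies in how you deal with the fact that Lemma~\ref{lem:timecontinuityutilde} only controls the BDF2 combination $g_h(t)=3w_h(t)-w_h(t-\Delta t)$ rather than $w_h(t)=\tu_h(t+\tau)-\tu_h(t)$ itself. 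You first extract a genuine first-difference $Z$-estimate $\int\|w_h\|_Z^2\le C\tau(\tau+\Delta t)+C\Delta t$ by an absorption argument (the inequality $6\|w_h(t)\|^2\le\|g_h(t)\|^2+2\|w_h(t-\Delta t)\|^2$ integrates and absorbs since $1/3<1$), and then apply Lemma~\ref{lem:lionslike} once. The paper instead keeps the second-difference form and applies Lemma~\ref{lem:lionslike} directly to it (term $(a)$), then handles the leftover one-step differences $\|u_h(t)-u_h(t-\Delta t)\|_{L^2}$ (terms $(b),(c)$) by a second algebraic splitting using the bound $\sum_m\|u_h^{m+1}-2u_h^m+u_h^{m-1}\|_{L^2}^2\le C$ from the energy estimate, followed by another application of Lemma~\ref{lem:lionslike} with $\tau=\Delta t$. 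Your route is arguably more streamlined (one invocation of the Lions-like lemma rather than three), at the cost of the extra absorption step; the paper's route stays closer to what Lemma~\ref{lem:timecontinuityutilde} literally provides and exploits the second-difference energy bound, which you do not use.

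Two minor issues in your write-up: first, the identity you state, $g_h(t)=3w_h(t)-(D_h(t+\tau)-D_h(t))$, is off (in fact $g_h(t)=2w_h(t)+(D_h(t+\tau)-D_h(t))$ since $w_h(t)-w_h(t-\Delta t)=D_h(t+\tau)-D_h(t)$), but you correctly pivot to the telescoping/absorption argument, which is sound. Second, your ``straddling'' argument for the regime $\tau<\Delta t$ is not right: the support of $\tu_h(\cdot+\tau)-\tu_h(\cdot)$ indeed has measure $N\tau=T\tau/\Delta t$, but the integrand there equals $\|\tu_h^{m+1}-\tu_h^m\|_{L^2}^2$, and there is no uniform bound on $\sum_m\|\tu_h^{m+1}-\tu_h^m\|_{L^2}^2$ available from the energy estimate, so you only get $\int\le C\,T\tau/\Delta t\le CT$, not $\le C\tau$. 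Fortunately this side argument is unnecessary: your main estimate $\int\|w_h\|_Z^2\le C\tau(\tau+\Delta t)+C\Delta t$ holds for all $\tau$, and combined with Lions-like and the $\epsilon/3$-splitting already covers every $h$ with $\Delta t$ below a threshold depending on $\epsilon$; the remaining (finitely many, along the discretization sequence) coarse $h$ are handled since each fixed piecewise-constant $\tu_h$ trivially satisfies the translation estimate. This is precisely how the paper treats the analogous residual $\Delta t$-terms as well.
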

				\begin{proof}
					We recall from Remark~\ref{rem:PVhonUh} that ${u}_h$ is given by the projection of $\tu_h$ onto $\Vh$, specifically ${u}_h = \PVh \tu_h$. Then, we can estimate the quantity in~\eqref{eq:timecont2} as follows, using that $\PVh$ is an orthogonal projection:
					\begin{multline*}
						\int_0^{T-\tau}\norm{\tu_h(t+\tau)-\tu_h(t)}_{L^2(\dom)}^2 dt = 	\underbrace{\int_0^{T-\tau}\norm{\tu_h(t+\tau)-\tu_h(t) - ({u}_h(t+\tau)-{u}_h(t))}_{L^2(\dom)}^2 dt}_{(i)} \\
						+  \underbrace{\int_0^{T-\tau}\norm{ {u}_h(t+\tau)- {u}_h(t)}_{L^2(\dom)}^2 dt}_{(ii)}.
					\end{multline*}
					We let $\epsilon>0$ arbitrary and given. For the first term, since $\norm{\tu_h-{u}_h}_{L^2}^2\leq C \Delta t$  by Lemma~\ref{lem:samelimits}, we have that there is $h_1>0$ (and corresponding $\Delta t $) small enough such that for any $0<h<h_1$, we have
					\begin{equation*}
						(i) \leq 2\int_0^{T-\tau}\norm{\tu_h(t+\tau)  -  {u}_h(t+\tau) }_{L^2(\dom)}^2 dt + 2\int_0^{T}\norm{\tu_h(t)-{u}_h(t)}_{L^2(\dom)}^2 dt < \frac{\epsilon}{8}.
					\end{equation*}
					For the second term (ii), we combine Lemma~\ref{lem:lionslike}   with the weak time continuity of $\tu_h$, Lemma~\ref{lem:timecontinuityutilde}, the discrete energy inequality, Lemma~\ref{lem:discenergy}, and the fact that ${u}_h= \PVh \tu_h$ as follows:
					First, we note that by the triangle inequality,
						\begin{multline}\label{eq:anotherawkwardestimate}
						\int_{0}^{T-\tau}\norm{u_h(t+\tau)-u_h(t)}_{L^2}^2 dt\\
						 \leq 	\underbrace{\frac34\int_{\Delta t}^{T-\tau} \norm{3(u_h(t+\tau)-u_h(t))-(u_h(t+\tau-\Delta t)-u_h(t-\Delta t))}_{L^2}^2 dt}_{(a)} \\
						+ \underbrace{\frac34 \int_{\Delta t}^{T-\tau}\norm{u_h(t+\tau)-u_h(t+\tau -\Delta t)}^2_{L^2} dt}_{(b)} + \underbrace{\frac34 \int_{\Delta t}^{T-\tau}\norm{u_h(t)-u_h(t -\Delta t)}^2_{L^2} dt}_{(c)}\\
						+ \underbrace{\int_0^{\Delta t} \norm{u_h(t+\tau)-u_h(t)}_{L^2}^2 dt}_{(d)}.
					\end{multline} 
					For the first term, we have by Lemma~\ref{lem:lionslike} that for any $\eta>0$ there exists $C_\eta>0$ and $h_\eta>0$ such that for $0<h<\min\{h_\eta,h_1\}$,
					\begin{align*}
						|(a)| &\leq \eta \int_{\Delta t}^{T-\tau} \norm{3(\tu_h(t+\tau)-\tu_h(t))-(\tu_h(t+\tau-\Delta t)-\tu_h(t-\Delta t))}_{H^1_0}^2 dt \\
						&+ C_\eta \int_{\Delta t}^{T-\tau} \norm{3(\tu_h(t+\tau)-\tu_h(t))-(\tu_h(t+\tau-\Delta t)-\tu_h(t-\Delta t))}_{Z}^2 dt\\
						& \leq \eta C_{E_0,f}+ C_\eta \int_{\Delta t}^{T-\tau} \norm{3(\tu_h(t+\tau)-\tu_h(t))-(\tu_h(t+\tau-\Delta t)-\tu_h(t-\Delta t))}_{Z}^2 dt,
					\end{align*}
					where we used the discrete energy estimate, Lemma~\ref{lem:discenergy} for the last inequality and we denoted again $Z:= (H^1_{\Div}(\dom)\cap H^k(\dom))^*$. Now we first choose $\eta = \epsilon / (16C_{E_0,f})$ and then, since we know by Lemma~\ref{lem:timecontinuityutilde} that the second term goes to zero as $\tau\to 0$, we have that for any $0<\tau<\bar{\tau}$ small enough that 
					\begin{equation*}
						C_\eta \int_{\Delta t}^{T-\tau} \norm{3(\tu_h(t+\tau)-\tu_h(t))-(\tu_h(t+\tau-\Delta t)-\tu_h(t-\Delta t))}_{Z}^2 dt \leq \frac{\epsilon}{16}.
					\end{equation*}
					Thus $|(a)|\leq \frac{\epsilon}{8}$. Terms $(b)$ and $(c)$ are estimated in the same way, so we just consider $(b)$. We have by the triangle inequality and the energy inequality, Lemma~\ref{lem:discenergy},
					\begin{align*}
						(b) & = \frac34 \int_{\tau+\Delta t}^{T}\norm{u_h(t)-u_h(t-\Delta t)}^2_{L^2} dt\\
						& \leq \frac{3}{8}  \int_{\tau+\Delta t}^{T}\norm{3(u_h(t)-u_h(t-\Delta t))-(u_h(t-\Delta t)-u_h(t-2\Delta t))}^2_{L^2} dt\\
						&\quad + \frac{3}{8} \int_{\tau+\Delta t}^{T}\norm{u_h(t)-2u_h(t-\Delta t)+u_h(t-2\Delta t))}^2_{L^2} dt\\
						& \leq  \frac{3}{8}  \int_{\tau+\Delta t}^{T}\norm{3(u_h(t)-u_h(t-\Delta t))-(u_h(t-\Delta t)-u_h(t-2\Delta t))}^2_{L^2} dt + C_{E_0,f}\Delta t.
					\end{align*}
					We pick $0<h_2\leq \min\{h_{\eta},h_1\}$ (and corresponding $\Delta t$) such that $C_{E_0,f}\Delta t \leq \frac{\epsilon}{16}$. Then, we again use Lemma~\ref{lem:lionslike} followed by Lemma~\ref{lem:timecontinuityutilde} for $\tau = \Delta t$ to estimate
					\begin{align*}
						(b)& \leq \frac{3}{8}  \int_{\tau+\Delta t}^{T}\norm{3(u_h(t)-u_h(t-\Delta t))-(u_h(t-\Delta t)-u_h(t-2\Delta t))}^2_{L^2} dt + \frac{\epsilon}{16}\\
						& \leq \eta C_{E_0,f} + C_\eta \int_{\tau+\Delta t}^{T}\norm{3(\tu_h(t)-\tu_h(t-\Delta t))-(\tu_h(t-\Delta t)-\tu_h(t-2\Delta t))}^2_{Z} dt+ \frac{\epsilon}{16} \\
						& \leq \frac{\epsilon}{8}+ C_\eta \int_{\tau+\Delta t}^{T}\norm{3(\tu_h(t)-\tu_h(t-\Delta t))-(\tu_h(t-\Delta t)-\tu_h(t-2\Delta t))}^2_{Z} dt. 
					\end{align*}
					Now we can find $0<h_3\leq \min\{h_\eta,h_1,h_2\}$ such that for the $0<h<h_3$ the corresponding $\Delta t$ satisfy
					\begin{equation*}
						C_\eta \int_{\tau+\Delta t}^{T}\norm{3(\tu_h(t)-\tu_h(t-\Delta t))-(\tu_h(t-\Delta t)-\tu_h(t-2\Delta t))}^2_{Z} dt\leq \frac{\epsilon}{8}.
					\end{equation*}
					Thus $(b)\leq \frac{\epsilon}{4}$. Similarly, $(c)\leq \frac{\epsilon}{4}$. For term $(d)$, we use that $u_h\in L^\infty(0,T;L^2(\dom))$ and therefore,
					\begin{equation*}
						(d)\leq 4\Delta t  \norm{u_h}_{L^\infty(0,T;L^2(\dom))}.
					\end{equation*}
					Hence for $\Delta t,h$ sufficiently small, we have $(d)\leq \frac{\epsilon}{8}$.
					Thus, $(ii)\leq \frac{7\epsilon}{8}$ and hence 
					\begin{equation*}
							\int_0^{T-\tau}\norm{\tu_h(t+\tau)-\tu_h(t)}_{L^2(\dom)}^2 dt\leq \epsilon,
					\end{equation*}
					which proves the result since $\epsilon$ was arbitrary.
				\end{proof}
				The result of this lemma, combined with the uniform $L^2([0,T];H^1_0(\dom))$-bound on the $\tu_h$ coming from the energy estimate, Lemma~\ref{lem:discenergy}, allow us to apply Simon's version of the Aubin-Lions-Simon lemma, Theorem~\ref{thm:Simonlemma}, to conclude that a subsequence of $\{\tu_h\}_{h>0}$ converges strongly in $L^2([0,T]\times\dom)$ to the limit $u$. By Lemma~\ref{lem:samelimits}, this implies that also $\{{u}_h\}_{h>0}$, $\{\bar{u}_h\}_{h>0}$ and $\{\hu_h\}_{h>0}$ have strongly convergent subsequences to the same limit $u$. 
				
					Finally, we use these estimates to prove:
					\begin{theorem}\label{thm:convergence}
						The sequences $\{u_h,\bar{u}_h,\widetilde{u}_h,\hu_h \}_{h>0}$ converge up to a subsequence to a Leray-Hopf solution $u$ of~\eqref{eq:NS} as in Definition~\ref{def:weaksol} under the condition that {$h^{k+1} = o({\Delta t})$}, where $k$ is the polynomial degree of the finite element space $\Uh$, as $h,\Delta t\to 0$.
					\end{theorem}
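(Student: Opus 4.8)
By the discussion preceding the statement, Simon's compactness criterion (Theorem~\ref{thm:Simonlemma}), applied to $\{\tu_h\}_{h>0}$ with the uniform $L^2(0,T;H^1_0(\dom))$-bound of Lemma~\ref{lem:discenergy} and the time-continuity estimate of Lemma~\ref{lem:timecont}, already gives that, along a subsequence, $\tu_h\to u$ strongly in $L^2([0,T]\times\dom)$; Lemma~\ref{lem:samelimits} then gives the same strong limit for $u_h$, $\bar u_h$ and $\hu_h$, and \eqref{eq:uhweakconv}--\eqref{eq:Hhweakconv} hold. Since $u$ is weakly divergence free (Lemma~\ref{lem:divfree2}) and $H^1_0(\dom)$ is weakly closed, the Helmholtz decomposition yields $u\in L^\infty(0,T;L^2_{\Div}(\dom))\cap L^2(0,T;H^1_{\Div}(\dom))$. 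It remains to verify \eqref{eq:weakformu}, the time-derivative bound in \eqref{eq:regularity}, the energy inequality \eqref{eq:energyineq}, and attainment of the initial datum together with strong continuity at $t=0$.

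\emph{Limit in the momentum equation.} For $w\in\Uh\subset\Yh$ one may use \eqref{eq:projection1} to replace $\tfrac{3\tu^{m+1}_h}{2\Delta t}$ by $\tfrac{3u^{m+1}_h}{2\Delta t}+\Grad(p^{m+1}_h-p^m_h)$ in \eqref{eq:step1fully}; integrating the pressure gradient by parts collapses Step~1 into the single relation $(\tfrac{3u^{m+1}_h-4u^m_h+u^{m-1}_h}{2\Delta t},w)+b(\hu^{m+1}_h,\tu^{m+1}_h,w)+\mu(\Grad\tu^{m+1}_h,\Grad w)=(p^{m+1}_h,\Div w)+(\PUh f^{m+1},w)$, valid for all $w\in\Uh$. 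Given $v\in C_c^\infty((0,T);C^\infty_{c,\Div}(\dom))$, I would test with $w=\PUh v(t^{m+1})$, multiply by $\Delta t$, sum over $m$ (the first step $m=0$ being irrelevant since $v$ vanishes near $t=0$), and pass to the limit term by term. Writing $\tfrac{3a-4b+c}{2}=(a-b)+\tfrac12(a-2b+c)$ and summing by parts, the second-difference part is $O(\Delta t)$ because it meets a discrete second derivative of the smooth $v$, while the remainder converges to $-\int_0^T(u,\partial_t v)\,dt$. For the convective term, skew-symmetry gives $b(\hu^{m+1}_h,\tu^{m+1}_h,w)=-b(\hu^{m+1}_h,w,\tu^{m+1}_h)$; by Lemma~\ref{lem:timecont} and the strong convergence above, both $\tu_h(\cdot)$ and $2\tu_h(\cdot)-\tu_h(\cdot-\Delta t)$ tend to $u$ strongly in $L^2([0,T]\times\dom)$, so the transport part converges to $-\int_0^T\!\int_\dom(u\cdot\Grad)v\cdot u\,dx\,dt$, while the divergence part of $b$ vanishes because $\Div(2\tu_h(\cdot)-\tu_h(\cdot-\Delta t))\weak0$ in $L^2$ on the support of $v$ and $w\cdot\tu_h$ converges strongly there. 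The diffusion term converges to $\mu\int_0^T(\Grad u,\Grad v)\,dt$ via \eqref{eq:H1approx}, and the forcing term to $\int_0^T(f,v)\,dt$. Finally, since $\Div v=0$ the pressure term equals $(p^{m+1}_h,\Div(\PUh v(t^{m+1})-v(t^{m+1})))=-(\Grad p^{m+1}_h,\PUh v(t^{m+1})-v(t^{m+1}))$, and since $\norm{\Grad p^{m+1}_h}_{L^2}\le C/\Delta t$ by Lemma~\ref{lem:discenergy} while $\norm{\PUh v-v}_{L^2}\le Ch^{k+1}\norm{v}_{H^{k+1}}$ by \eqref{eq:L2approx}, this term has size $O(h^{k+1}/\Delta t)$ after multiplication by $\Delta t$ and summation, hence vanishes precisely under the hypothesis $h^{k+1}=o(\Delta t)$. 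Collecting these limits yields \eqref{eq:weakformu}.

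\emph{Time-derivative bound and energy inequality.} From \eqref{eq:weakformu} and density, $\partial_t u=\mu\Delta u-(u\cdot\Grad)u+f$ in $(H^1_{\Div}(\dom))^*$; combining $\norm{(u\cdot\Grad)u}_{(H^1_{\Div})^*}\le\norm{u}_{L^4}^2\le C\norm{u}_{L^2}^{1/2}\norm{u}_{H^1}^{3/2}\in L^{4/3}(0,T)$ (with a better exponent when $d=2$) and $\norm{\Delta u}_{(H^1_{\Div})^*}+\norm{f}_{(H^1_{\Div})^*}\le C(\norm{u}_{H^1}+\norm{f}_{L^2})\in L^2(0,T)$ gives $\partial_t u\in L^{4/3}(0,T;(H^1_{\Div}(\dom))^*)$, completing \eqref{eq:regularity} and in particular yielding $u\in C_w([0,T];L^2(\dom))$. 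For \eqref{eq:energyineq} I would sum the per-step identity \eqref{eq:energytempx2} over $m=1,\dots,M-1$ together with the first-step identity \eqref{eq:firststepenergy2}, divide by $4$, discard all nonnegative terms except $\tfrac14\norm{u^M_h}_{L^2}^2+\tfrac14\norm{2u^M_h-u^{M-1}_h}_{L^2}^2$, and bound the initial contributions on the right by $\tfrac12\norm{u_0}_{L^2}^2+o(1)$ using \eqref{eq:u0estimate}. Choosing $M=M_h$ with $t^{M_h}\to t$ and a further subsequence along which $u_h(t)\to u(t)$ and $\bar u_h(t)\to u(t)$ in $L^2(\dom)$ for a.e.\ $t$, the two quarter-terms combine to $\tfrac12\norm{u(t)}_{L^2}^2$ (using $2u^{M_h}_h-u^{M_h-1}_h=\bar u_h\to u$), weak lower semicontinuity gives $\liminf_h\mu\Delta t\sum_{m}\norm{\Grad\tu^{m+1}_h}_{L^2}^2\ge\mu\int_0^t\norm{\Grad u}_{L^2}^2$, and $f_h\to f$, $\tu_h\to u$ strongly in $L^2$ handle the forcing term, producing \eqref{eq:energyineq} for a.e.\ $t$.

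\emph{Initial datum, conclusion, and the main obstacle.} Running the limit passage once more with $v\in C^\infty([0,T);C^\infty_{c,\Div}(\dom))$ satisfying $v(T)=0$ but $v(0)\ne0$ leaves the boundary term $-(u^0_h,v(0))$ from the summation by parts; since $u^0_h=\PVh\PUh u_0$ with $\PUh u_0\to u_0$ in $L^2(\dom)$ and $\PVh u_0=u_0$, one has $u^0_h\to u_0$, so the limiting identity is the weak formulation with initial value $u_0$, which combined with $u\in C_w([0,T];L^2(\dom))$ forces $u(0)=u_0$. Then \eqref{eq:energyineq} gives $\limsup_{t\to0^+}\norm{u(t)}_{L^2}\le\norm{u_0}_{L^2}$, and weak continuity gives $u(t)\weak u_0$ in $L^2(\dom)$ as $t\to0$, whence $\norm{u(t)}_{L^2}\to\norm{u_0}_{L^2}$ and therefore $u(t)\to u_0$ strongly in $L^2(\dom)$; thus $u$ is a Leray--Hopf solution of~\eqref{eq:NS}. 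Since the genuinely difficult compactness estimate is already contained in Lemma~\ref{lem:timecont}, the main remaining difficulty is bookkeeping: collapsing Steps~1 and~2 so that the pressure enters only through the divergence defect $\Div(\PUh v-v)$ — which, given the mere $O(1/\Delta t)$ control of $\norm{\Grad p_h}_{L^2}$, is exactly what forces the condition $h^{k+1}=o(\Delta t)$ — together with recovering the sharp constant $\tfrac12$ in \eqref{eq:energyineq} from the BDF2 quantity $\norm{u^M_h}_{L^2}^2+\norm{2u^M_h-u^{M-1}_h}_{L^2}^2$ by exploiting that both interpolants converge to $u$.
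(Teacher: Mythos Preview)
Your proposal is correct and follows essentially the same route as the paper: combine Steps~1 and~2 into a single identity tested against $\PUh v$, pass to the limit term by term (with the pressure term controlled by $\norm{\Grad p_h}_{L^2}\cdot\norm{\PUh v-v}_{L^2}=O(h^{k+1}/\Delta t)$, which is exactly where the hypothesis $h^{k+1}=o(\Delta t)$ enters), and then recover the energy inequality from the discrete balance and the initial datum from weak continuity plus the energy bound. The only differences are cosmetic bookkeeping --- you split the BDF2 quotient into a first and a second difference rather than summing by parts directly, you flip the trilinear form via skew-symmetry before passing to the limit, and you extract $u(0)=u_0$ from an explicit boundary term rather than from Fatou's lemma --- none of which changes the substance of the argument.
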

					\begin{proof}
						Using the definitions of the interpolations~\eqref{eq:defuh}--\eqref{eq:defph}, we can rewrite the numerical scheme~\eqref{eq:step1fully}--\eqref{eq:projectionfullydiscrete} as
						\begin{equation}
							\label{eq:udiscinterp}
							\left(D_t^- u_h,v\right)+ b(\hu_h,\tu_h,v)+\mu(\Grad \tu_h,\Grad v) = (p_h,\Div v)+(f_h,v),
						\end{equation}
						and 
						\begin{equation}
							\label{eq:divconstraintinterp}
							(u_h,\Grad q)=0,
						\end{equation}
						with the test functions in the same space as in~\eqref{eq:step1fully}--\eqref{eq:projectionfullydiscrete}, and where we denoted
						\begin{equation*}
							D_t^- u_h(t) := \frac{3 u_h(t)-4 u_h(t-\Delta t)+ u_h(t-2\Delta t)}{2\Delta t},
						\end{equation*}
						for $t>\Delta t$ and 
						\begin{equation*}
							D_t^- u_h(t) : = \frac{u_h(t)-u_h(t-\Delta t)}{\Delta t},
						\end{equation*}
						for $t\in [0,\Delta t]$.
						By the previous considerations, weak convergences~\eqref{eq:uhweakconv}--\eqref{eq:Hhweakconv}, Lemma~\ref{lem:discenergy}, Lemma~\ref{lem:timecont}, Lemma~\ref{lem:samelimits}, and Theorem~\ref{thm:Simonlemma}, we can improve~\eqref{eq:uhweakconv}--\eqref{eq:Hhweakconv} to
						\begin{align}
							u_h,\bar{u}_h,\hu_h,\widetilde{u}_h\to u,&\quad \text{in }\, L^2(0,T;L^2(\dom)),\label{eq:uhstrongconv}\\
							\widetilde{u}_h,\hu_h\weak u,&\quad \text{in }\, L^2(0,T;H^1_0(\dom)).\label{eq:tuhweakconvagain}
						\end{align}
						From Lemma~\ref{lem:divfree2}, we also obtain that the limit $u$ is weakly divergence free. Since $u\in L^2([0,T];H^1_0(\dom))$, this implies that $\Div u=0$ a.e. in $[0,T]\times\dom$.
						
						Now we take a test function $v \in C^\infty_c((0,T)\times \dom)$ that is divergence free and choose $\Delta t$ small enough such that $v(t,\cdot) = 0$ on $[T-2\Delta t, T]$ and on $[0,2\Delta t]$. We integrate in time and rewrite~\eqref{eq:udiscinterp} as 
							\begin{align}
								\label{eq:uapp}
								&\int_0^T\Big[\underbrace{\left(D_t^- u_h,v\right)}_{(i)}+ \underbrace{b(\hu_h,\tu_h,v)}_{(ii)}+\underbrace{\mu(\Grad \tu_h,\Grad v)}_{(iii)} -\underbrace{(f_h,v)}_{(iv)}\Big] dt\\
								&=\int_0^T\Big[\underbrace{\left(D^-_t u_h,v-\PUh v\right)}_{(v)}+ \underbrace{b(\hu_h,\tu_h,v-\PUh v)}_{(vi)}+\underbrace{\mu(\Grad \tu_h,\Grad (v-\PUh v))}_{(vii)}\Big] dt\notag\\
								&\quad +\int_0^T\Big[\underbrace{(p_h,\Div \PUh v)}_{(viii)}-\underbrace{(f_h,v-\PUh v)}_{(ix)}\Big] dt,\notag
							\end{align}
						where $\PUh$ is the $L^2$-projection onto $\Uh$ as before. We consider the limits $h,\Delta t\to 0$ in each of these terms. For the first term, we can rewrite the integrals as follows:
\begin{align*}
							\int_0^T (i) dt & = \frac{1}{2\Delta t}\int_{\Delta t}^T (3u_h(t)-4u_h(t-\Delta t)+ u_h(t-2\Delta t),v(t)) dt + \frac{1}{\Delta t}\int_0^{\Delta t}(u_h(t)-u_h(t-\Delta t),v(t)) dt \\
							& =  \frac{1}{2\Delta t}\int_{2\Delta t}^T (3u_h(t)-4u_h(t-\Delta t)+ u_h(t-2\Delta t),v(t)) dt \\
							& = \frac{1}{2\Delta t}\left( \int_{2\Delta t}^T 3 (u_h(t),v(t)) dt - \int_{\Delta t}^{T-\Delta t}4 (u_h(t),v(t+\Delta t)) dt +\int_{0}^{T-2\Delta t}(u_h(t),v(t+2\Delta t)) dt \right) \\
							&  = \frac{1}{2\Delta t}\left( \int_{0}^T 3 (u_h(t),v(t)) dt - \int_{0}^{T}4 (u_h(t),v(t+\Delta t)) dt +\int_{0}^{T}(u_h(t),v(t+2\Delta t)) dt \right),
\end{align*}
where we used that $v$ is compactly supported in time and zero on $[0,2\Delta t]$ and $[T-2\Delta t,T]$. Letting 
\begin{equation*}
	D_t^+ v(t) := \frac{-v(t+2\Delta t)+4v(t+\Delta t)-3 v(t)}{2\Delta t},
\end{equation*}
we can rewrite this as
\begin{equation*}
	\int_0^T (i) dt  = -\int_0^T (u_h(t),D_t^+ v(t)) dt. 
\end{equation*}
						Since $D_t^+ v(t)\to \partial_t v(t)$ pointwise due to the smoothness of $v$, and $u_h\weakstar u$ in $L^\infty(0,T;L^2(\dom))$, we obtain that the first term converges to
						\begin{equation*}
							\int_0^T (i) dt \stackrel{h\to 0}{\longrightarrow} -\int_0^T (u(t),\partial_t v(t)) dt .
						\end{equation*}
						 For the second term, we  use that $\hu_h,\tu_h\to u$ in $L^2([0,T]\times\dom)$ and $\hu_h,\tu_h\weak u$ in $L^2([0,T];H^1_0(\dom))$:
						\begin{multline*}
							\int_0^T (ii) dt =  \int_0^T\int_{\dom}(\hu_h\cdot \Grad)\tu_h\cdot v +\frac12\Div \hu_h \tu_h \cdot v dxdt\\
							\stackrel{h\to 0}{\longrightarrow}\int_0^T\int_{\dom} (u\cdot\Grad)u \cdot v + \frac12 \Div u u\cdot v dx dt =\int_0^T\int_{\dom} (u\cdot\Grad)u \cdot vdx dt,
						\end{multline*}
						using that $\Div u=0$ a.e. in $[0,T]\times\dom$.
						For the third term, we use that $\Grad \tu_h\weak \Grad u$ in $L^2([0,T]\times\dom)$ and hence
						\begin{equation*}
							\int_0^T (iii) dt \stackrel{h\to 0}{\longrightarrow}\mu \int_0^T (\Grad u,\Grad v) dt.
						\end{equation*}
						We consider the fourth term: We have
						\begin{equation*}
							\begin{split}
								\int_0^T (iv) dt &= \sum_{m=0}^{N-1}\int_{t^m}^{t^{m+1}} (\PUh f^{m+1},v) dt\\
								& = \sum_{m=0}^{N-1}\int_{t^m}^{t^{m+1}}(  f^{m+1},\PUh v) dt\\
								& = \sum_{m=0}^{N-1}\int_{t^m}^{t^{m+1}} \frac{1}{\Delta t}\int_{t^{m+1/2}}^{t^{m+3/2}}(  f(s),\PUh v(t)-v(t))ds dt + \sum_{m=0}^{N-1}\int_{t^m}^{t^{m+1}} \frac{1}{\Delta t}\int_{t^{m+1/2}}^{t^{m+3/2}}(  f(s), v(t)) ds dt\\
								& \stackrel{h,\Delta t\to 0}{\longrightarrow} \int_0^T (f,v) dt,
							\end{split}
						\end{equation*}
						since time averages converge in $L^2$ and $\norm{v-\PUh v}_{L^2(\dom)}\to 0$ as $h\to 0$. Hence the left hand side of~\eqref{eq:uapp} converges to
						\begin{equation*}
							\int_0^T\left[-( u ,\partial_t v)+ ((u\cdot \Grad) u, v)+\mu(\Grad u,\Grad v) -(f,v)\right] dt.
						\end{equation*}
						In order to conclude that $u$ satisfies the weak formulation of~\eqref{eq:NS}, we thus need to show that the right hand side converges to zero.
						So, we consider the terms on the right hand side: For term (v), since $D_t^- u_h\in \Uh$, we have $(D^-_t u_h,v)=(D^-_t u_h,\PUh v)$, hence this term vanishes.
						Next,
						\begin{equation*}
							\begin{split}
								\left|\int_0^T (vi)dt \right|&=\left|\int_0^Tb(\hu_h,\tu_h,v-\PUh v) dt\right|\\
								&\leq \left(\norm{\hu_h}_{L^\infty(0,T;L^2)}\norm{\Grad\tu_h}_{L^2([0,T]\times\dom)}+\frac12 \norm{\Div\hu_h}_{L^2([2\Delta t,T]\times\dom)}\norm{\tu_h}_{L^\infty(0,T;L^2)} \right)\norm{v-\PUh}_{L^2([0,T]\times\dom)}\\
								&\leq C h \norm{v }_{L^2(0,T;H^1(\dom))},
							\end{split}
						\end{equation*}
						where we used the approximation property of the $L^2$-projection,~\eqref{eq:L2approx}, and the energy estimate, Lemma~\ref{lem:discenergy}. Thus also this term vanishes as $h,\Delta t \to 0$.
						For term (vii), we have
						\begin{equation*}
							\begin{split}
								\left|\int_0^T (vii)dt \right|&=	\left|\mu\int_0^T(\Grad\tu_h,\Grad(v-\PUh v))dt\right|\\
								&\leq \mu \norm{\Grad\tu_h}_{L^2([0,T]\times\dom)}\norm{\Grad(v-\PUh v)}_{L^2([0,T]\times\dom)}\\
								&\leq C h\norm{v}_{L^2(0,T;H^2(\dom))},
							\end{split}
						\end{equation*}
						again using the approximation property of the $L^2$-projection,~\eqref{eq:H1approx}, and the discrete energy estimate. 
						For the term (viii), we have,
						\begin{equation*}
							\begin{split}
								\left|\int_0^T (viii) dt\right|& =	\left|\int_0^T(\Grad p_h,\PUh v-v) dt\right|\\
								&\leq  \norm{\Grad p_h}_{L^2([0,T]\times \dom)}\norm{\PUh v-v}_{L^2([0,T]\times\dom)}\\
								& \leq C\frac{h^{k+1}}{\Delta t} \norm{v}_{L^2([0,T]; H^{k+1}(\dom))},
							\end{split}
						\end{equation*}
						similar to the estimate~\eqref{eq:pressuretimecont}.  Clearly, also this term goes to zero if $h^{k+1}=o({\Delta t})$. 
						Finally, for the ninth term, we have, since $f_h=\PUh f^m$,
						\begin{equation*}
							(ix) = (f_h,v-\PUh v) = 0,
						\end{equation*}
						i.e., this term vanishes.
						Thus we obtain in the limit
						\begin{equation}
							\label{eq:uprelimlimit}
							\int_0^T\left[-( u ,\partial_t v)+ ((u\cdot \Grad)u,v)+\mu(\Grad u,\Grad v) -(f,v)\right] dt = 0,
						\end{equation}
						for any divergence free $v\in C^\infty_c([0,T)\times\dom))$.
						Next, we reconsider the discrete energy balance~\eqref{eq:prelimenergyestimate}. Adding $2\Delta t$ times~\eqref{eq:firststepenergy2} to it and using~\eqref{eq:u0estimate}, we obtain
						 	\begin{align}\label{eq:prelimlimitenergy}
						 		&E^M_h  	+\sum_{m=1}^{M-1}\norm{u^{m+1}_h-2u^m_h+ u^{m-1}_h}_{L^2}^2+ 3\sum_{m=1}^{M-1}\norm{\tu^{m+1}_h-u^{m+1}_h}_{L^2}^2 
						 		+ 4 \mu\Delta t \sum_{m=0}^{M-1}\norm{\Grad \tu^{m+1}_h}_{L^2}^2\\
						 	&\qquad 	= \norm{u_h^1}_{L^2}^2 + \norm{2 u_h^1 -u_h^0}_{L^2}^2 + \frac{4\Delta t}{3}\norm{\Grad p_h^1}_{L^2}^2+4 \Delta t\sum_{m=0}^{M-1}( f^{m+1},\tu^{m+1}_h)\notag\\
						 	&\qquad 	-2\norm{u^1_h}_{L^2}^2 - 2\norm{\tu^1_h-u^0_h}_{L^2}^2 + 2\norm{u^0_h}_{L^2}^2
						 		-2\Delta t^2\norm{\Grad p^1_h}_{L^2}^2 +2\Delta t^2\norm{\Grad p^0_h}_{L^2}^2\notag\\
						 	&\qquad 	\leq -\norm{u_h^1}_{L^2}^2 + \norm{2 u_h^1 -u_h^0}_{L^2}^2+4 \Delta t\sum_{m=0}^{M-1}( f^{m+1},\tu^{m+1}_h)
						 	- 2\norm{\tu^1_h-u^0_h}_{L^2}^2 + 2\norm{u_0}_{L^2}^2\notag\\
						 	& \qquad \leq -\norm{u_h^1}_{L^2}^2 + \norm{2 u_h^1 -u_h^0}_{L^2}^2+4 \Delta t\sum_{m=0}^{M-1}( f^{m+1},\tu^{m+1}_h)
						  + 2\norm{u_0}_{L^2}^2.\notag
						 	\end{align}
						 	Rewriting this in terms of the piecewise constant functions $u_h$, $\bar{u}_h$, $\tu_h$ and $\hu_h$ and
						 	passing to the limit on the left hand side and using the properties of weak convergence, we have for almost any $t\in [0,T]$
						 	\begin{multline}
						 		2\norm{u(t)}_{L^2}^2 + 4\mu \int_0^t \norm{\Grad u_h(s)}_{L^2}^2 ds\\
						 		\leq \lim_{h,\Delta t\to 0}\left(E_h(t)  	+\Delta t^{-1}\int_0^t\norm{u_h(s)-\bar{u}_h(s)}_{L^2}^2 ds + 3\Delta t^{-1}\int_0^t\norm{\tu_h(s)-u_h(s)}_{L^2}^2 ds 
						 		+ 4 \mu\int_0^t\norm{\Grad \tu_h(s)}_{L^2}^2ds \right).
						 	\end{multline}
						 	Let us consider the terms on the right hand side. For the term involving $f$, we have
						 	\begin{equation*}
						 		4\int_0^t (f_h,\tu_h) ds \stackrel{\Delta t,h\to 0}{\longrightarrow} 4 \int_0^t (f,u) ds,
						 	\end{equation*}
						 	using the strong convergence of $\tu_h$ and $f_h$ in $L^2([0,T]\times\dom)$. We rewrite the remaining terms as follows:
						 	\begin{equation*}
						 		-\norm{u_h^1}_{L^2}^2 + \norm{2 u_h^1 - u_h^0}_{L^2}^2 + 2 \norm{u_0}_{L^2}^2 = 3 \norm{u_h^1}_{L^2}^2 +\norm{u_h^0}_{L^2}^2 - 4(u_h^1,u_h^0)+ 2 \norm{u_0}_{L^2}^2. 
						 	\end{equation*}
						 	We have $u_h^0\to u_0$ strongly in $L^2(\dom)$ and $u_h^1\weak u_0$ in $L^2(\dom)$. Therefore
						 	\begin{equation*}
						 	\lim_{h,\Delta t\to 0}\left(	\norm{u_h^0}_{L^2}^2 - 4(u_h^1,u_h^0)+ 2 \norm{u_0}_{L^2}^2 \right) = -\norm{u_0}_{L^2}^2.
						 	\end{equation*}
						 	Moreover, passing to the limit in $\Delta t$ times equation~\eqref{eq:firststepenergy2}, we obtain
						 	\begin{equation*}
						 		\lim_{h,\Delta t\to 0}\norm{u_h^1}_{L^2}^2 \leq \norm{u_0}_{L^2}^2.
						 	\end{equation*}
						 	Combining these estimates, we obtain that the right hand side of~\eqref{eq:prelimlimitenergy} satisfies
						 	\begin{equation*}
						 		\lim_{h,\Delta t\to 0}\left( -\norm{u_h^1}_{L^2}^2 + \norm{2 u_h^1 -u_h^0}_{L^2}^2+4 \Delta t\sum_{m=0}^{M-1}( f^{m+1},\tu^{m+1}_h)
						 		+ 2\norm{u_0}_{L^2}^2\right) \leq 2 \norm{u_0}_{L^2}^2 + 4 \int_0^t (f,u)ds, 
 						 	\end{equation*}
						 	which implies that 
						 	 the limit $u$ satisfies for almost any $t\in [0,T]$,
						\begin{equation}\label{eq:energylimit}
						\norm{u(t)}_{L^2(\dom)}^2 +2\mu\int_0^t\norm{\Grad u(s)}_{L^2(\dom)}^2 ds \leq \norm{u_0}_{L^2(\dom)}^2 + 2\int_0^t(f(s),u(s)) ds,
						\end{equation}
						which is the energy inequality~\eqref{eq:energyineq}. This implies the spatial regularity required in~\eqref{eq:regularity}. To see that $u$ satisfies the time regularity in~\eqref{eq:regularity}, we note that due to the spatial regularity of $u$, we can use the density of smooth functions and extend the weak formulation~\eqref{eq:uprelimlimit} to functions $v\in L^4(0,T;H^1_{\Div}(\dom))$. Finally, we establish the continuity at zero. We write
						\begin{equation*}
							\norm{u(t)-u_0}_{L^2(\dom)}^2 = \norm{u(t)}_{L^2(\dom)}^2 + \norm{u_0}_{L^2(\dom)}^2 - 2 (u(t),u_0).
						\end{equation*}
						First, we note that $\partial_t u\in L^{4/3}(0,T;(H^1_{\Div}(\dom)^*))$ and $u\in L^\infty(0,T;L^2_{\Div}(\dom))$ imply that $u$ is weakly continuous in time with values in $L^2_{\Div}(\dom)$, i.e., the mapping
						\begin{equation*}
							t\mapsto (u(t),v),
						\end{equation*}
						is continuous for any $v\in L^2_{\Div}(\dom)$ (see for example Lemma II.5.9 in~\cite{Boyer2013} or~\cite{Temam1977}). This implies that
						\begin{equation*}
							\lim_{t\to 0} (u(t),u_0) = \norm{u_0}_{L^2(\dom)}^2.
						\end{equation*}
						Furthermore, from Fatou's lemma, we obtain that
						\begin{equation*}
							\norm{u_0}_{L^2(\dom)}^2 \leq \lim_{t\to 0}\norm{u(t)}_{L^2(\dom)}^2. 
						\end{equation*}
						On the other hand, sending $t\to 0$ in the energy inequality~\eqref{eq:energylimit}, we obtain that
						\begin{equation*}
							\lim_{t\to 0}\norm{u(t)}_{L^2(\dom)}^2 \leq \norm{u_0}_{L^2(\dom)}^2.
						\end{equation*}
						Thus
						\begin{equation*}
							\lim_{t\to 0} \norm{u(t)}_{L^2(\dom)}^2 = \norm{u_0}_{L^2(\dom)}^2 
						\end{equation*}
						and we conclude that
						\begin{equation*}
							\lim_{t\to 0}\norm{u(t)-u_0}_{L^2(\dom)}^2 = 0.
						\end{equation*}
						Therefore,   $u$ is a Leray-Hopf weak solution of~\eqref{eq:NS}. 
					\end{proof}
					
						\section*{Acknowledgments}
						I would like to thank Alexandre Chorin and Saleh Elmohamed for insightful discussions and providing useful references on this topic.
						
						\appendix
						\section{Discrete Gr\"onwall inequality}
						The following lemma from~\cite[Proposition 4.1]{Emmrich1999} is useful for proving an energy bound in the case that the external source in the fluid equation is nonzero.
						\begin{lemma}[\cite{Emmrich1999}]
							\label{lem:discretegronwall}
							Let $\{a_n\}_{n\in \N}\geq 0$, and $\{b_n\}_{n\in \N}\geq 0$ be two nonnegative sequences satisfying
							\begin{equation}\label{eq:assgronwall}
								a_{n+1} \leq b_{n+1} + \nu\Delta t\sum_{j=1}^{n+1} a_j,\quad n=0,1,2,\dots
							\end{equation}
							for parameters $\nu,\Delta t\geq 0$ and assume that $1-\nu\Delta t>0$.
							Then $a_{n+1}$ satisfies
							\begin{equation}\label{eq:discgronwall}
								a_{n+1}\leq b_{n+1} +\frac{\nu\Delta t}{1-\nu\Delta t}\sum_{j=0}^n\left(\frac{1}{1-\nu\Delta t}\right)^{n-j} b_{j+1}.
							\end{equation}
							Moreover, if $\{b_n\}_{n\in\N}$ is monotonically increasing, it follows
							\begin{equation*}
								a_n \leq b_n \left(\frac{1}{1-\nu\Delta t}\right)^n.
							\end{equation*}
						\end{lemma}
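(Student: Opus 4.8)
The plan is to prove Lemma~\ref{lem:discretegronwall} by the standard two-move argument: first absorb the ``diagonal'' term $a_{n+1}$ from the right-hand side of~\eqref{eq:assgronwall} into the left, then unroll the resulting linear recursion for the partial sums and substitute back. Throughout I would write $q := (1-\nu\Delta t)^{-1}$, which is well defined and satisfies $q \geq 1$ by the hypothesis $1-\nu\Delta t > 0$, and record the algebraic identity $q\,\nu\Delta t = q-1 = \nu\Delta t/(1-\nu\Delta t)$ — this is exactly the prefactor appearing in~\eqref{eq:discgronwall}.

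First I would split $\sum_{j=1}^{n+1} a_j = a_{n+1} + A_n$ with $A_n := \sum_{j=1}^{n} a_j$ (so $A_0 = 0$), so that~\eqref{eq:assgronwall} becomes $a_{n+1} \le b_{n+1} + \nu\Delta t\, a_{n+1} + \nu\Delta t\, A_n$; dividing through by $1-\nu\Delta t > 0$ gives
\begin{equation*}
	a_{n+1} \le q\, b_{n+1} + (q-1)\, A_n, \qquad n = 0,1,2,\dots,
\end{equation*}
which for $n = 0$ is already the base case $a_1 \le q b_1$. Adding $A_n$ to both sides yields the recursion $A_{n+1} = A_n + a_{n+1} \le q\, A_n + q\, b_{n+1}$, and a one-line induction on $n$ starting from $A_0 = 0$ produces the closed form $A_n \le \sum_{i=1}^{n} q^{\,n+1-i} b_i$.

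Substituting this bound on $A_n$ into the displayed inequality gives $a_{n+1} \le q\, b_{n+1} + (q-1)\sum_{i=1}^{n} q^{\,n+1-i} b_i$; writing $q\, b_{n+1} = b_{n+1} + (q-1)\, q^{0}\, b_{n+1}$ folds the leading term into the sum as its $i = n+1$ summand, and the reindexing $j = i-1$ then turns the right-hand side into $b_{n+1} + \frac{\nu\Delta t}{1-\nu\Delta t}\sum_{j=0}^{n}(1-\nu\Delta t)^{-(n-j)} b_{j+1}$, which is exactly~\eqref{eq:discgronwall}. For the monotone case, if $\{b_n\}$ is nondecreasing then $b_i \le b_n$ for $i \le n$, so applying the bound with $n$ in place of $n+1$ gives $a_n \le b_n\big(q + (q-1)\sum_{i=1}^{n-1} q^{\,n-i}\big)$; summing the geometric series $\sum_{i=1}^{n-1} q^{\,n-i} = \sum_{k=1}^{n-1} q^k = (q^n - q)/(q-1)$ collapses the bracket to $q^n$, i.e. $a_n \le q^n b_n = b_n (1-\nu\Delta t)^{-n}$.

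I expect no genuine obstacle here: the statement is elementary and the argument is purely algebraic, using $1-\nu\Delta t > 0$ only to make the division in the first step legitimate. The one place that needs care is the bookkeeping in the back-substitution step — the index shift $i \leftrightarrow j+1$ together with the absorption of the $b_{n+1}$ term must be tracked precisely in order to recover the stated constant $\nu\Delta t/(1-\nu\Delta t)$ and the exponent $n-j$, rather than an off-by-one variant.
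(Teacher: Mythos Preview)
Your argument is correct: absorbing the diagonal term, deriving the recursion $A_{n+1}\le qA_n+qb_{n+1}$ for the partial sums, unrolling it, and substituting back reproduces~\eqref{eq:discgronwall} with the right prefactor and exponent, and the geometric-series collapse in the monotone case is fine (the degenerate case $q=1$, i.e.\ $\nu\Delta t=0$, is trivial). There is nothing to compare against here, since the paper does not supply its own proof of this lemma but simply refers the reader to \cite[Proposition~4.1]{Emmrich1999}; your self-contained derivation is the standard one.
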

						\begin{proof}
							For the proof, see \cite[Proposition 4.1]{Emmrich1999}.
						\end{proof}

						\section{Aubin-Lions-Simon lemma}
						\label{app:ALSlemma}
						
						We recall Simon's version of the Aubin-Lions-Simon lemma~\cite[Theorem 1]{Simon1987}:
						\begin{theorem}[Simon's compactness criterion~\cite{Simon1987}]
							\label{thm:Simonlemma}
							Let $F\subset L^p(0,T;B)$ where $1\leq p<\infty$ and $B$ is a Banach space. Then $F$ is relatively compact in $L^p(0,T;B)$ if and only if the following two conditions are satisfied:
							\begin{enumerate}
								\item $\left\{\int_{t_1}^{t_2} f(t) dt \, : \, f\in F\right\}$ is relatively compact in $B$ for all $0<t_1<t_2<T$,  \label{cond1} 
								
								\item $\norm{f(\cdot + \tau)-f}_{L^p(0,T-\tau;B)}\longrightarrow 0$ as $\tau\to 0$, uniformly for $f\in F$.   \label{cond2}
							\end{enumerate}
							
						\end{theorem}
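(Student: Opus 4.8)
The plan is to pass to the limit $h,\Delta t\to 0$ in the interpolated form of the scheme. First I would rewrite \eqref{eq:step1fully}--\eqref{eq:projectionfullydiscrete} using the time interpolations \eqref{eq:defuh}--\eqref{eq:defph} as the single identity
\begin{equation*}
(D_t^- u_h,v)+b(\hu_h,\tu_h,v)+\mu(\Grad\tu_h,\Grad v)=(p_h,\Div v)+(f_h,v),\quad v\in\Uh,
\end{equation*}
together with the discrete divergence constraint $(u_h,\Grad q)=0$. The discrete energy estimate \eqref{eq:energybalance} of Lemma~\ref{lem:discenergy} supplies uniform bounds, so Banach--Alaoglu yields the weak/weak-$\star$ limits \eqref{eq:uhweakconv}--\eqref{eq:Hhweakconv}. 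The crucial upgrade is to strong $L^2([0,T]\times\dom)$ convergence: I would apply Simon's criterion (Theorem~\ref{thm:Simonlemma}, with $B=L^2(\dom)$) to $\{\tu_h\}$, whose first condition follows from the energy bound and whose second (uniform time-continuity) condition is exactly Lemma~\ref{lem:timecont}. Lemma~\ref{lem:samelimits} then transfers strong convergence to $u_h,\bar u_h,\hu_h$ and identifies all four limits with a single $u$, while Lemma~\ref{lem:divfree2} combined with $u\in L^2(0,T;H^1_0(\dom))$ gives $\Div u=0$ a.e.

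Next I would fix a divergence-free $v\in C_c^\infty((0,T)\times\dom)$ and take $\Delta t$ small enough that $v$ vanishes on $[0,2\Delta t]$ and $[T-2\Delta t,T]$, then test with $\PUh v$ and split into a ``consistent'' part (tested against $v$) and an ``error'' part (tested against $v-\PUh v$). For the time-derivative term I would move the backward BDF2 difference onto $v$ by discrete summation by parts, producing a forward second-order difference $D_t^+v\to\partial_t v$ pointwise, so that $\int_0^T(D_t^-u_h,v)\,dt\to-\int_0^T(u,\partial_t v)\,dt$ by weak-$\star$ convergence. The trilinear term passes to the limit by combining strong $L^2$ convergence of $\hu_h,\tu_h$ with weak $H^1$ convergence of $\tu_h$, the spurious $\tfrac12(\Div u)\,u\cdot v$ contribution vanishing since $\Div u=0$; the viscous and forcing terms pass by weak $H^1$ convergence and $L^2$-convergence of the time averages of $\PUh f$. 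On the error side, $(D_t^-u_h,v-\PUh v)=0$ because $D_t^-u_h\in\Uh$, and the trilinear, viscous and forcing error terms are $O(h)$ by the approximation bounds \eqref{eq:L2approx}--\eqref{eq:H1approx} and the energy estimate. The one delicate term is the pressure/divergence term $(p_h,\Div\PUh v)=(\Grad p_h,\PUh v-v)$: using only the weak pressure control $\norm{\Grad p_h}_{L^\infty(0,T;L^2)}\le C\Delta t^{-1}$ from the energy estimate together with \eqref{eq:L2approx}, this is bounded by $C\,h^{k+1}\Delta t^{-1}\norm{v}_{H^{k+1}}$, which is exactly where the hypothesis $h^{k+1}=o(\Delta t)$ is needed and where I expect the main obstacle to lie. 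Passing to the limit then gives \eqref{eq:uprelimlimit} for all divergence-free $v\in C_c^\infty([0,T)\times\dom)$, i.e. the distributional form \eqref{eq:weakformu}.

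Finally I would recover the energy inequality and continuity at zero. Starting from the summed discrete balance \eqref{eq:prelimenergyestimate} (with the first-step contribution \eqref{eq:firststepenergy2} and the identity \eqref{eq:u0estimate} folded in), I would rewrite all quantities via the piecewise-constant interpolants and pass to the limit using weak lower semicontinuity of the $L^2$ norms, strong $L^2$ convergence of $\tu_h,f_h$ in the forcing term, and the first-step limits $u_h^0\to u_0$ strongly, $u_h^1\weak u_0$, and $\lim\norm{u_h^1}_{L^2}^2\le\norm{u_0}_{L^2}^2$; this produces \eqref{eq:energyineq}. The resulting $L^\infty(L^2)\cap L^2(H^1_0)$ regularity lets me extend \eqref{eq:uprelimlimit} by density to test functions in $L^4(0,T;H^1_{\Div}(\dom))$, giving $\partial_t u\in L^{4/3}(0,T;(H^1_{\Div}(\dom))^*)$ and hence the time regularity in \eqref{eq:regularity}. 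For continuity at zero I would use that this time regularity together with $u\in L^\infty(L^2_{\Div})$ makes $t\mapsto(u(t),v)$ continuous, so $(u(t),u_0)\to\norm{u_0}_{L^2}^2$, and combine Fatou with the $t\to0$ limit of the energy inequality to obtain $\norm{u(t)}_{L^2}^2\to\norm{u_0}_{L^2}^2$, whence $\norm{u(t)-u_0}_{L^2}\to0$. The two technical hurdles I anticipate are the pressure term, which forces the mesh--time-step coupling $h^{k+1}=o(\Delta t)$, and the careful summation-by-parts treatment of the BDF2 time derivative near the temporal endpoints.
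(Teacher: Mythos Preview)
Your proposal does not address the stated theorem at all. The statement you were asked to prove is Simon's compactness criterion (Theorem~\ref{thm:Simonlemma}): a purely functional-analytic characterization of relative compactness of a set $F\subset L^p(0,T;B)$ in terms of compactness of time-integrals in $B$ and uniform $L^p$-equicontinuity under time shifts. What you have written instead is a proof outline for Theorem~\ref{thm:convergence}, the convergence of the numerical scheme to a Leray--Hopf solution. These are two entirely different results: Simon's theorem is an abstract tool that the paper merely quotes from~\cite{Simon1987} and does not prove, whereas the convergence theorem \emph{applies} Simon's criterion as one step among many.

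If your intent was to prove Simon's criterion, none of the ingredients you invoke (the discrete energy estimate, Lemma~\ref{lem:timecont}, the BDF2 summation by parts, the pressure coupling $h^{k+1}=o(\Delta t)$) are relevant; a correct proof proceeds via the Fr\'echet--Kolmogorov characterization of compactness in $L^p(0,T;B)$, reducing to finite-dimensional compactness on the $B$-side via condition~(1) and to uniform equicontinuity on the time side via condition~(2). If instead you meant to prove Theorem~\ref{thm:convergence}, your outline is essentially the paper's own argument and is fine, but you have attached it to the wrong statement.
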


								\bibliographystyle{abbrv}
								\bibliography{projection}
								
							\end{document}